\newcolumntype{P}[1]{>{\centering\arraybackslash}p{#1}}
\def\vertexsize {1.2pt}   
\newcommand{\vertex}[2][1]{\fill (#2) circle [radius = #1 * \vertexsize];}
\newcommand{\divisor}[2][1]{\fill [red] (#2) circle [radius = 2* \vertexsize];}
\definecolor{verde}{rgb}{0.01, 0.75, 0.24}
\newcommand{\val}{\operatorname{val}}
\newcommand{\rr}{\operatorname{r}}
\newcommand{\g}{\operatorname{g}}
\newcommand{\ddeg}{\operatorname{deg}}
\newcommand{\R}{\mathbb{R}}
\newcommand{\ZZ}{\mathbb{Z}}
\newcommand{\ga}{\gamma}
\newcommand{\Ga}{\Gamma}
\DeclareMathOperator{\ddiv}{div}
\theoremstyle{plain}                       
\newtheorem{theo}{Theorem}[section]
\newtheorem{maintheorem}{Theorem}	
\newtheorem{conj}{Conjecture}    
\newtheorem{prop}[theo]{Proposition}    
\newtheorem{lem}[theo]{Lemma} 
\theoremstyle{definition}               
\newtheorem{defin}{Definition}
\newtheorem{ex}{Example}   
\newtheorem{rk}{Remark}   
\theoremstyle{remark} 
\title{A tropical version of Martens' theorem for metric graphs}
\author{Giusi Capobianco}
\address{Department of Mathematics\\ University of Roma Tor Vergata \\ 00133 Rome, Italy}
\email{\href{mailto:capobianco@axp.mat.uniroma2.it}{capobianco@axp.mat.uniroma2.it}}
\author{Angelina Zheng}
\address{Department of Mathematics\\ University of T\"ubingen \\ 72076 T\"ubingen, Germany}
\email{\href{mailto:zheng@math.uni-tuebingen.de}{zheng@math.uni-tuebingen.de}}
\date{}
\thanks{Both authors are
members of the INDAM group GNSAGA. The second author is supported by Alexander von Humboldt Foundation.}
\begin{document}
\begin{abstract}
    We study the conjecture stated by Jensen and Len on a tropical version on Martens' theorem via the Brill--Noether rank of a tropical curve. 
    We recall Coppens' counterexample of Martens-special chain of cycles, and we generalize the construction defining another class of graphs, Martens-special trees of cycles, for which the conjecture does not hold in a similar setting. 
    These are not the only counterexamples. 
    However, we prove that the conjecture holds for all metric graphs with a stricter assumption on the degree in the Brill--Noether rank.
\end{abstract}
\thanks{}
\maketitle
\tableofcontents
\section{Introduction}

The classical Martens' theorem provides insight into the structure and properties of algebraic curves characterizing hyperelliptic curves and refining the result obtained by Clifford \cite{Clifford}, \cite[Chapter III]{ACGH}, as it offers an understanding of the Brill--Noether loci. 
This result determines their dimensions and plays a significant role in various geometric constructions and classifications in Brill--Noether theory. 
Let us recall Martens' theorem. 

\begin{theo}{\cite[Theorem 5.1]{ACGH}}
    Let $C$ be a smooth curve of genus $g,$ and $d,r$ integers satisfying $0<2r\leq d<g.$ Then
    $\operatorname{dim}W_d^r(C)\leq d-2r,$ and equality holds precisely when $C$ is hyperelliptic.
\end{theo}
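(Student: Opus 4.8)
The plan is to bound the dimension of an arbitrary irreducible component of $W^r_d(C)$ through its Zariski tangent space, and then to extract the hyperelliptic characterization from the equality case. Write $W^r_d(C)=\{L\in\operatorname{Pic}^d(C):h^0(L)\ge r+1\}$ and fix a component $X$ with general point $L$. First I would arrange that $h^0(L)=r+1$ exactly: since the general point of $X$ attains the minimal value of $h^0$ on $X$, the only alternative is $X\subseteq W^{r+1}_d(C)$, a degenerate case to be disposed of separately by comparison with the bound for larger $r$. By Riemann--Roch and Serre duality, $h^0(K-L)=h^1(L)=g-d+r\ge 2$ because $d<g$ and $r\ge 1$, so both $|L|$ and $|K-L|$ are nonempty special series.

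The structural input is the realization of $W^r_d(C)$ as a determinantal locus and the resulting tangent space description. At $L\in W^r_d(C)\setminus W^{r+1}_d(C)$, the space $T_LW^r_d(C)$ is the annihilator of $\operatorname{Im}\mu_0\subseteq H^0(K)$ inside $T_L\operatorname{Pic}^d(C)=H^1(\mathcal{O}_C)$ under the Serre duality pairing, where $\mu_0\colon H^0(L)\otimes H^0(K-L)\to H^0(K)$ is the cup-product map. Consequently $\dim_L X\le\dim T_LW^r_d(C)=g-\operatorname{rank}\mu_0$, and the whole problem is converted into a lower bound on the rank of $\mu_0$.

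To bound the rank I would apply the base-point-free pencil trick to a base-point-free pencil inside $H^0(L)$, together with the general-position estimate $\dim\operatorname{Im}(V\otimes W\to H^0(L\otimes M))\ge\dim V+\dim W-1$; after splitting off base loci to reduce to base-point-free series, this yields $\operatorname{rank}\mu_0\ge h^0(L)+h^0(K-L)-1=(r+1)+(g-d+r)-1=g-d+2r$. Therefore $\dim_L X\le g-(g-d+2r)=d-2r$, and since $L$ was general in an arbitrary component, $\dim W^r_d(C)\le d-2r$, proving the inequality.

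The equality statement is where the real work lies. Equality forces $\operatorname{rank}\mu_0=h^0(L)+h^0(K-L)-1$ for general $L$ in a top-dimensional component, i.e. the cup-product map attains its minimal possible rank. The hard part is to show that this minimality characterizes hyperellipticity: I would invoke the sharp form of the general-position bound, in which equality holds precisely when the two special series are compounded with a common pencil, and argue that for special divisors this forces a $g^1_2$, so $C$ is hyperelliptic; this is essentially a refined Clifford argument, and the careful handling of base loci is the delicate point. Conversely, on a hyperelliptic curve with $g^1_2=|A|$ the bundles $L=rA+p_1+\cdots+p_{d-2r}$ sweep out a component of $W^r_d(C)$ of dimension exactly $d-2r$, which I would confirm by geometric Riemann--Roch. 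Thus the inequality is a clean tangent-space estimate, while the main obstacle is identifying the minimal-rank locus of $\mu_0$ with the hyperelliptic condition.
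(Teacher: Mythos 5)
The paper does not prove this statement: it is the classical Martens theorem, quoted verbatim from [ACGH] as background, so there is no in-paper argument to compare against. Measured against the standard reference, your route is genuinely different: ACGH's proof is an elementary induction on $r$ that adds and subtracts general points of $C$ and compares components of $W^r_d$, $W^{r+1}_{d+1}$, etc., whereas you argue infinitesimally through the determinantal description $T_LW^r_d=(\operatorname{Im}\mu_0)^{\perp}$ and the Hopf-type bound $\operatorname{rank}\mu_0\ge h^0(L)+h^0(K-L)-1$ (which needs only that section multiplication on an integral curve has no zero divisors, not a base-point-free pencil, so your invocation of the base-point-free pencil trick is a red herring here). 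For the inequality this works and is clean: the reduction to $h^0(L)=r+1$ at a general point of a component is legitimate (the alternative $X\subseteq W^{r+1}_d$ is killed by Clifford or by the bound for larger $r$), and $g-\operatorname{rank}\mu_0\le d-2r$ follows. The converse direction, that a hyperelliptic curve attains equality via $L=rA+p_1+\cdots+p_{d-2r}$, is also fine.

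The genuine gap is the forward equality statement, which is the actual content of Martens' theorem. From $\dim X=d-2r$ you correctly deduce that $\mu_0$ has the minimal rank $h^0(L)+h^0(K-L)-1$ at a general $L$, but the step ``minimal rank of the multiplication map forces $|L|$ and $|K-L|$ to be compounded with a common pencil, and that pencil is a $g^1_2$'' is asserted, not proved. This requires a structure theorem for bilinear maps without zero divisors attaining the Hopf bound (equivalence with polynomial multiplication on $\mathbb{P}^1$), plus a separate argument that the resulting common involution has degree exactly $2$ rather than some $e\ge 3$, plus the base-locus bookkeeping you yourself flag as ``the delicate point.'' None of this is carried out, and it is not a routine verification; as written, the characterization of hyperellipticity --- the part of the theorem that distinguishes it from Clifford's inequality --- remains unestablished. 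The usual fix is either to supply the minimal-rank classification in full or to abandon the infinitesimal method for the equality case and fall back on the elementary ACGH induction, which reduces to $r=1$ and extracts the $g^1_2$ directly.
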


In tropical geometry, the Brill--Noether theory has been adapted to study the combinatorial analogs of these algebraic objects. The tropical version of Brill--Noether theory translates problems about divisors on algebraic curves into problems about divisors on metric graphs.
The divisor theory on graphs and metric graphs was developed by Baker and Norine in \cite{BN09}, \cite{BC} and later studied by many other authors.
One of the most useful result is Baker's specialization Lemma,  extended by Caporaso \cite{Cap12}, which gives a relation between the rank of a divisor and that of its tropicalization, namely the rank of a divisor can only increase under the specialization map. 
This relation between curves and graphs provides alternative proofs for classical algebraic results, such as the Brill--Noether theorem (\cite{CDPR}).

In the past decades tropical analogs of well-known algebraic theorems such as the Riemann--Roch theorem and Clifford's theorem, have been proved by various authors; see, for instance, \cite{GK08}, \cite{AC}, \cite{MikZhark}, \cite{facchini}, \cite{Coppens_Clifford}.
However, it is not always true that the same statements hold in tropical geometry. An example is indeed Martens' theorem: the naive tropical analog of the above theorem does not hold and the known statement for abstract tropical curves is the following. 

\begin{theo}{\cite[Theorem A.1]{L}}
    Let $\Gamma$ be a metric graph of genus $g,$ and $d,r$ integers satisfying $0<2r\leq d<g.$ Then 
    $\operatorname{dim}W_d^r(\Gamma)\leq d-2r$ and equality holds when $\Gamma$ is hyperelliptic. Moreover, equality may also hold for $\Gamma$ non-hyperelliptic.
\end{theo}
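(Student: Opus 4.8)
The plan is to establish the three parts of the statement in turn: the dimension bound $\ddim W_d^r(\Gamma)\le d-2r$, the equality for hyperelliptic $\Gamma$, and the existence of non-hyperelliptic curves attaining equality. For the bound I would argue by induction on $r$, replacing the classical cup-product/tangent-space computation—which has no direct tropical analogue—by a \emph{subtract-a-general-point} correspondence. The induction bottoms out at $r=0$, where $W_d^0(\Gamma)$ is the image of the tropical Abel--Jacobi map $\operatorname{Sym}^d\Gamma\to\operatorname{Pic}^d(\Gamma)$ and hence has dimension $\min(d,g)=d=d-2\cdot 0$, since $d<g$.

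For the inductive step, fix $r\ge 1$ and consider the incidence correspondence $I=W_d^r(\Gamma)\times\Gamma$ together with the subtraction map $\sigma\colon I\to\operatorname{Pic}^{d-1}(\Gamma)$, $([D],p)\mapsto[D-p]$. Since the Baker--Norine rank drops by at most one when a single point is removed, $\rr(D-p)\ge\rr(D)-1\ge r-1$ for every $p$, so $\sigma$ factors through $W_{d-1}^{r-1}(\Gamma)$. Applying the inductive hypothesis $\ddim W_{d-1}^{r-1}(\Gamma)\le (d-1)-2(r-1)=d-2r+1$ together with the fibre-dimension theorem for the piecewise-linear map $\sigma$, one gets $\ddim W_d^r(\Gamma)+1=\ddim I=\ddim\sigma(I)+f\le \ddim W_{d-1}^{r-1}(\Gamma)+f\le (d-2r+1)+f$, where $f$ is the dimension of a general fibre of $\sigma$. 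Thus the bound $\ddim W_d^r(\Gamma)\le d-2r$ follows as soon as $\sigma$ is generically finite, i.e. $f=0$.

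The \emph{main obstacle} is exactly this generic finiteness. A general fibre of $\sigma$ over a class $[D']=[D-p]$ of its image is $\{q\in\Gamma:\rr(D'+q)\ge r\}$; as $\Gamma$ is one-dimensional we have $f\le 1$, and $f=0$ amounts to showing that for a general class $[D]$ in a top-dimensional cell of $W_d^r(\Gamma)$ and a general point $p$, subtracting $p$ genuinely lowers the rank to $r-1$ and no one-parameter family of points restores it. This is where a tropical substitute for the base-point-free pencil trick is required. I would exploit the polyhedral structure of $W_d^r(\Gamma)$—in particular the fact that the rank is constant on the relative interior of the cells of a suitable decomposition—together with the theory of $p$-reduced divisors, in order to choose $p$ transversally so that $\rr(D-p)=r-1$ and the rank-raising locus of $D-p$ is finite. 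It is precisely the breakdown of this step for certain graphs that allows equality to persist for non-hyperelliptic curves, so the argument must be a genuine genericity argument rather than an infinitesimal one.

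For the equality assertions, the hyperelliptic case is direct. If $\Gamma$ is hyperelliptic with hyperelliptic class $\mathfrak{g}$ (the unique class of degree $2$ and rank $1$), then by the structure of special divisors on hyperelliptic metric graphs every class of rank $\ge r$ has the form $r\mathfrak{g}+[E]$ with $E$ effective of degree $d-2r$; hence $W_d^r(\Gamma)=r\mathfrak{g}+W_{d-2r}^0(\Gamma)$ has dimension $\min(d-2r,g)=d-2r$, so equality holds. That equality may also occur for non-hyperelliptic $\Gamma$ is then established by construction rather than by a general principle: it suffices to produce a single non-hyperelliptic family attaining the bound, and such examples are furnished by Coppens' Martens-special chains of cycles, which are recalled and generalized in the body of the paper.
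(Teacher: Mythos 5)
First, a point of comparison: the paper does not actually prove this statement --- it is quoted verbatim from \cite[Theorem A.1]{L}, and the only supporting material in the text is Example \ref{ex.counterexnonhyp}, which exhibits a non-hyperelliptic graph attaining equality. Your treatment of the two equality clauses is consistent with that (the hyperelliptic computation via $W_d^r(\Gamma)=r\mathfrak{g}+W_{d-2r}^0(\Gamma)$ is standard, and the final clause is legitimately settled by exhibiting an example). The problem is the inequality.

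There is a genuine gap exactly where you flag the ``main obstacle,'' and it is not a technicality you can defer: the generic finiteness of the subtraction map $\sigma$ is the entire content of the bound, and in the tropical setting it can fail on top-dimensional cells. Concretely, take the graph of Example \ref{ex.counterexnonhyp}. There $W_3^1(\Gamma)$ contains the segment $[x+y]+\mathrm{AJ}(\overline{z_1z_2})$, and for $E=x+y+z_1+z_2$ one has, for \emph{every} $q$ in the edge $\overline{z_1z_2}$, the relation $z_1+z_2\sim q+q'$ (with $q'$ the mirror point), hence $\rr(E-q)\ge 1$ for a one-dimensional set of $q$. Equivalently, the fibre of $\sigma\colon W_3^1(\Gamma)\times\Gamma\to W_2^0(\Gamma)$ over $[x+y]$ contains the one-dimensional set $\{([x+y+q],q): q\in\overline{z_1z_2}\}$. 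Restricted to the top-dimensional cell $C'\times\overline{z_1z_2}$ with $C'=[x+y]+\mathrm{AJ}(\overline{z_1z_2})$, the map $\sigma$ has one-dimensional fibres everywhere, so the cell-by-cell dimension count $\ddim I=\ddim\sigma(I)+f$ yields only $\ddim W_3^1\le (3-2)+1$, which is vacuous. Since $\sigma$ is piecewise affine, ``choosing $p$ transversally'' cannot repair this on such a cell: the fibre dimension of an affine map is constant on each cell, so you would need to argue either that every top-dimensional cell of $W_d^r(\Gamma)$ admits \emph{some} edge along which subtraction is finite-to-one, or to run a separate argument for the cells that are translation-invariant under all Abel--Jacobi directions. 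Neither is supplied, and the base-point locus $\{q:\rr(D'+q)\ge r\}=\{q:\rr(K-D'-q)=\rr(K-D')\}$ genuinely can be one-dimensional on metric graphs, unlike for algebraic curves. A related conceptual slip: you write that the breakdown of this step is what allows non-hyperelliptic equality, but if the step breaks down you lose the \emph{inequality} itself (which holds for all metric graphs, including the non-hyperelliptic ones attaining equality), not merely the rigidity statement; so the mechanism you propose does not correctly separate the bound from its equality case.
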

In \cref{ex.counterexnonhyp} we will present, for instance, a non-hyperelliptic metric graph for which $\operatorname{dim}W_d^r(\Gamma)= d-2r.$\sloppy

Jensen and Len suggested in \cite{L} that, in order to have a tropical analog of Martens' theorem, one should replace the dimension of the Brill--Noether locus with the \emph{Brill--Noether rank}, first introduced in \cite{LPP}.
The Brill--Noether rank $w_d^r(\Gamma)$ of a tropical curve $\Gamma$, contrary to the dimension of the Brill--Noether locus $\operatorname{dim}W_d^r(\Gamma)$, varies upper semi-continuously on the moduli space of tropical curves, see \cite[Theorem 5.4]{Len} and \cite[Theorem 1.2, 1.6 and 1.7]{LPP}. Therefore, it might be a good candidate for the analog of Martens' theorem for tropical curves. 

\begin{prop}{\cite[Propostion A.3]{L}}\label{prp:ineq}
    Let $\Ga$ be a metric graph of genus $g$, and let $r, d$ be as in the conditions of Martens’ theorem. Then
$$w_d^r(\Gamma)\leq d-2r.$$
Moreover, for hyperelliptic graphs, $w_d^r(\Gamma)= d-2r.$
\end{prop}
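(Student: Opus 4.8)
The plan is to prove the two inequalities $w_d^r(\Ga)\le d-2r$ (for all $\Ga$) and $w_d^r(\Ga)\ge d-2r$ (for $\Ga$ hyperelliptic) separately, working with the divisor-theoretic description of the invariant. Recall that $w_d^r(\Ga)$ is the largest integer $n$ for which every effective divisor $E$ of degree $r+n$ is \emph{dominated} by the locus, in the sense that $E\le D$ for some effective representative of a class $[D]\in W_d^r(\Ga)$. This property is monotone decreasing in $n$, since removing a point from $E$ preserves domination, so it is enough to show that it fails at $n=d-2r+1$ for every metric graph and holds at $n=d-2r$ in the hyperelliptic case. Throughout I will use only the elementary facts $\rr(D-G)\ge\rr(D)-\ddeg G$ and $\rr(D+F)\ge\rr(D)$ for effective $G,F$.

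For the upper bound I would exhibit a single effective divisor of degree $r+(d-2r+1)=d-r+1$ that no class in $W_d^r(\Ga)$ can dominate. Since $0\le d-r+1\le g$, there is an effective divisor $E$ of this degree with $\rr(E)=0$: for instance a $v$-reduced divisor with no chips at the base point $v$ has this rank, because then $E-v$ is not equivalent to an effective divisor, and such configurations occur in every degree up to $g$ (a general effective divisor of degree $d-r+1$ works equally well). Suppose now that some $[D]\in W_d^r(\Ga)$ had an effective representative $D\ge E$. Then $G:=D-E$ is effective of degree $d-(d-r+1)=r-1$, and the subtraction inequality gives $\rr(E)=\rr(D-G)\ge\rr(D)-\ddeg G\ge r-(r-1)=1$, contradicting $\rr(E)=0$. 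Hence this one $E$ is not dominated, the defining condition fails at $n=d-2r+1$, and $w_d^r(\Ga)\le d-2r$.

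For the hyperelliptic case I would conversely dominate \emph{every} effective divisor $E$ of degree $d-r$. Let $\mathfrak g$ be the degree-two rank-one class; since $\rr(\mathfrak g)=1$, every point $q$ admits a conjugate $q'$ with $q+q'\sim\mathfrak g$. As $\ddeg E=d-r\ge r$, choose $r$ points $e_1,\dots,e_r$ of $E$ (with multiplicity) and set $D:=E+e_1'+\dots+e_r'$, an effective divisor of degree $d$ with $D\ge E$. Grouping the conjugate pairs gives $D\sim r\mathfrak g+F$ with $F:=E-e_1-\dots-e_r$ effective of degree $d-2r$. Moreover $\rr(r\mathfrak g)\ge r$, since for any effective $q_1+\dots+q_r$ one has $r\mathfrak g-(q_1+\dots+q_r)\sim q_1'+\dots+q_r'\ge 0$; adding the effective divisor $F$ then yields $\rr(D)\ge r$. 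Thus $[D]\in W_d^r(\Ga)$ dominates $E$, so $w_d^r(\Ga)\ge d-2r$, and with the upper bound we obtain equality.

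I expect the only real difficulty to lie in the upper bound, and specifically in guaranteeing, uniformly over all metric graphs including very special ones, a genuinely non-special test divisor: the hypothesis $d<g$ is exactly what forces the degree $d-r+1$ to be at most $g$, hence to support an effective divisor of rank $0$, whereas in degrees $\ge g$ every class is effective and the obstruction disappears. The conceptually pleasant point, which makes the argument short, is that this single rank-$0$ divisor defeats the entire locus $W_d^r(\Ga)$ at once through one application of the subtraction inequality, so no control on the size or geometry of $W_d^r(\Ga)$ is needed. The hyperelliptic direction, being an explicit construction from the $g^1_2$, should pose no obstacle; alternatively one could deduce the non-specialness input from the $r=1$ case of the dimensional Martens inequality recalled above.
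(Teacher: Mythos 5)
The paper does not actually prove this proposition; it is quoted verbatim from Jensen--Len \cite{L}, so there is no internal proof to compare yours against. Your argument is correct and is essentially the standard one for this statement: the upper bound by exhibiting a single effective divisor $E$ of degree $d-r+1$ with $\rr(E)=0$ and observing that $D\geq E$ with $\ddeg(D)=d$ and $\rr(D)\geq r$ would force $\rr(E)\geq r-\ddeg(D-E)=1$; the hyperelliptic lower bound by completing any effective divisor of degree $d-r$ with $r$ conjugate points under the $g^1_2$. The only step you assert rather than prove is the existence, in every degree $k$ with $2\leq k\leq g-1$, of an effective divisor of rank exactly $0$; this is true, and the cleanest justification is precisely the construction the paper itself uses later (Lemma~\ref{lm:cycles} together with property (E1)): put one chip at a generic interior point of each of $k$ edges in the complement of a spanning tree, so that Dhar's algorithm started at any base point $v$ burns the entire graph, whence the divisor is $v$-reduced with no chip at $v$ and has rank $0$. (A general degree-$k$ configuration need not be $v$-reduced, so the phrase ``a general effective divisor works equally well'' should be replaced by this explicit construction, or by an appeal to $\ddim W^1_k\leq k-2<k=\ddim W^0_k$.) One further pedantic remark: the definition as transcribed in this paper asks only for $\Supp(E)\subseteq\Supp(D)$ rather than $E\leq D$; your upper-bound divisor is multiplicity-free, so the two conditions agree for it and your argument covers either reading.
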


Furthermore, Jensen and Len have conjectured that equality holds only for hyperelliptic tropical curves, suggesting the following tropical version of Martens' theorem.
\begin{conj}{\cite{L}}\label{JLconj}
    Let $\Gamma$ be a metric graph of genus $g,$ and $d,r$ as above. Then 
    $w_d^r(\Gamma)\leq d-2r,$ and equality holds precisely when $\Gamma$ is hyperelliptic.
\end{conj}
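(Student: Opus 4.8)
The inequality $w_d^r(\Gamma)\le d-2r$ and the implication ``hyperelliptic $\Rightarrow$ equality'' are already supplied by \cref{prp:ineq}, so the whole content of \cref{JLconj} is the converse: assuming $w_d^r(\Gamma)=d-2r$, one must force $\Gamma$ to be hyperelliptic, equivalently to carry a degree--$2$ class of rank $1$ (a tropical $g^1_2$). The plan is to imitate the classical proof of Martens' theorem by a downward induction on the pair $(r,d)$, of two types. First, a \emph{rank--reduction} step, passing from $(r,d)$ to $(r-1,d-2)$, that preserves extremality, since the target bound $(d-2)-2(r-1)$ again equals $d-2r$; iterating it $r-1$ times brings us to the case $r=1$, with $w_{d-2r+2}^1(\Gamma)=d-2r$. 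Second, once $r=1$, a \emph{degree--reduction} step from $(1,m)$ to $(1,m-1)$ lowering the bound by one at each stage, terminating at $w_2^1(\Gamma)=0$. Since $w_2^1(\Gamma)=0$ is equivalent to $W_2^1(\Gamma)\neq\emptyset$, i.e.\ to the existence of a degree--$2$, rank--$1$ class, this base case is exactly hyperellipticity, and propagating it back up would close the converse.

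To realize each descent I would first unwind the definition of the Brill--Noether rank from \cite{LPP} in the extremal case, isolating the effective divisors $E$ that witness $w_d^r=d-2r$, and then subtract a suitably chosen point (or rank--$1$, degree--$2$ contribution) from the witnessing divisors while controlling the resulting ranks. The tropical toolkit for this is standard: reduced divisors and Dhar's burning algorithm to compute linear systems and locate base points, the behavior of ranks under specialization (Baker's lemma in the form extended by Caporaso \cite{Cap12}), and the combinatorial characterization of hyperelliptic metric graphs via a degree--$2$ harmonic morphism to a tree. Because Martens refines Clifford, I would also try to bootstrap from the tropical Clifford theorem and, crucially, its equality case (\cite{Coppens_Clifford}, \cite{facchini}), upgrading the single--divisor rigidity there to the family statement encoded by $W_d^r(\Gamma)$ and the rank $w_d^r(\Gamma)$.

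The step I expect to be the true obstacle — and, given the abstract, the point at which the naive argument must fail — is the rank--reduction descent $w_d^r=d-2r\Rightarrow w_{d-2}^{r-1}=d-2r$. On a smooth curve the extremality of $\dim W_d^r$ descends cleanly because subtracting a general point from a general special divisor drops the rank by exactly one; on a metric graph the far richer combinatorics of special divisors spoils this, and non-hyperelliptic graphs can keep $w_d^r$ maximal while admitting no $g^1_2$ at all. Coppens' Martens--special chains of cycles, and the Martens--special trees of cycles constructed in this paper, are exactly such configurations, so the converse fails and \cref{JLconj} cannot be proved as literally stated. The realistic program is therefore twofold: first, exhibit these extremal non-hyperelliptic graphs explicitly, pinning down the degrees $d$ at which they occur; and second, identify a \emph{stricter range of the degree} in which these obstructing configurations are combinatorially excluded, and prove that there the two descent steps do go through, so that $w_d^r(\Gamma)=d-2r$ once again forces hyperellipticity. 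Determining the sharp degree bound, and checking that the counterexamples saturate it, is the crux.
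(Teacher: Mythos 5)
You correctly read the situation: \cref{JLconj} is false as stated, the inequality and the hyperelliptic direction are already \cref{prp:ineq}, and the only provable content is a converse restricted to a smaller degree range. But your proposed mechanism is off in two concrete ways. First, you locate the obstruction in the rank-reduction step $(r,d)\to(r-1,d-2)$; in fact every counterexample inside the Martens range $d<g$ already occurs at $r=1$ (Martens-special chains and trees of cycles of rank $1$, and the graphs of \cref{prp:new_counterex}, all with $d=g-1$), where no rank reduction is performed. The real failure is in your degree-reduction descent at $r=1$: extremality does not propagate downward in degree. The graphs of \cref{prp:new_counterex} satisfy $w_{g-1}^1=g-3$ yet $w_d^1<d-2$ for all $d\le g-2$ (\cref{prp:new_counterex2}) and are not hyperelliptic, so the step $w_m^1=m-2\Rightarrow w_{m-1}^1=m-3$ is simply false at $m=g-1$ and your induction never reaches the base case $w_2^1=0$. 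Second, your rank reduction shifts the degree, whereas the paper's \cref{prp:induction} keeps $d$ fixed and runs in the contrapositive: a degree-$(d-1)$ divisor witnessing $w_d^1<d-2$ is promoted to a witness for $w_d^r<d-2r$ by absorbing $r-1$ of its own points into the auxiliary divisor. That version is provable; yours, in the extremal direction, is neither established nor needed.

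More importantly, the entire positive content of \cref{th:main_theo} is missing from your sketch. The paper does not descend in degree at all: for non-hyperelliptic $\Gamma$ and $d\le g-2$ it directly constructs a degree-$(d-1)$ divisor $E=x_1+\dots+x_{d-1}$ whose points sit in pairwise ``private'' cycles (property (E1)) at generic distances (property (E2)) and avoid a carefully chosen non-hyperelliptic cycle $\gamma$ together with one or two auxiliary cycles $\gamma',\gamma''$ (the case split (NH1)a, (NH1)b, (NH2) on how $\gamma$ sits inside $\Gamma$), and then shows via Dhar's burning algorithm that no added point $p$ can raise the rank to $1$ (\cref{prp:no_sep}). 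The bound $d-1\le g-3$ is exactly what guarantees enough room to avoid $\gamma,\gamma',\gamma''$, which is why $d=g-2+r$ is the sharp threshold where the counterexamples live. None of the tools you propose (specialization, the equality case of tropical Clifford) enters this argument, and without some substitute for the explicit witness construction your program does not yield a proof of the restricted statement either.
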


Recently, Coppens has proved that the conjecture holds for chains of cycles of genus $g$ for rank $r$ and degree $d$ such that $1\leq r\leq g-2$ and $2r\leq d\leq g-3+r$. 

\begin{theo}{\cite[Theorem A]{coppens2024studyhmartenstheorem}}\label{th:Copp1}
    Let $\Ga$ be a chain of cycles of genus $g$. Let $r$ be an integer with $1\leq r\leq g-2$ and let $d$ be an integer with $2r\leq d\leq g-3+r$. Then $w^r_d(\Ga)=d-2r$ implies $\Ga$ hyperelliptic.
\end{theo}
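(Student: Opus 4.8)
The plan is to prove the contrapositive: if a chain of cycles $\Gamma$ of genus $g$ is \emph{not} hyperelliptic, then $w_d^r(\Gamma) \leq d - 2r - 1$ for every pair $(r,d)$ with $1 \leq r \leq g-2$ and $2r \leq d \leq g-3+r$. Recall that $w_d^r(\Gamma)$ is the largest integer $n$ for which every effective divisor $E$ of degree $n$ is dominated by some $D \in W_d^r(\Gamma)$ (that is, $D \geq E$ and $\rr(D) \geq r$); so it suffices to exhibit a single effective divisor $E$ of degree exactly $d - 2r$ that is dominated by no divisor of rank $r$ and degree $d$. The inequality $w_d^r(\Gamma) \leq d-2r$ is already guaranteed by \cref{prp:ineq}, so the entire content lies in proving the strictness of this bound in the non-hyperelliptic case.

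First I would set up the combinatorial description of $W_d^r(\Gamma)$ for a chain of cycles, following Cools--Draisma--Payne--Robbiano and Pflueger: every divisor class of degree $d$ has a unique $v_0$-reduced representative, effective divisors and their ranks are computed through Dhar's burning algorithm, and the special linear series are encoded by lattice paths, or displacement tableaux, whose entries are constrained by the torsion orders $m_1, \dots, m_g$ of the individual cycles. In this language the hyperelliptic locus has a clean description: $\Gamma$ is hyperelliptic precisely when each cycle is balanced (equivalently, its torsion order equals $2$ in the relevant sense), so that a degree-$2$ harmonic morphism to a path exists. Hence non-hyperelliptic means there is at least one \emph{unbalanced} cycle $\gamma_{i_0}$ whose torsion datum forbids the symmetric chip movements available in the hyperelliptic case.

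The core step is to convert this local asymmetry into a global obstruction, by placing the certificate divisor $E$ so that the unbalanced cycle $\gamma_{i_0}$ constrains the configuration of any dominating divisor. Concretely, I would distribute the $d-2r$ chips of $E$ across the chain, pinning a chip at or near $\gamma_{i_0}$ and filling the remaining cycles generically, so that, after passing to reduced form, any $D \geq E$ with $\rr(D) \geq r$ would be forced to route chips through $\gamma_{i_0}$ in the balanced way that only a hyperelliptic chain permits. The range hypotheses enter here: $2r \leq d$ supplies enough chips to realize rank $r$, while $d \leq g-3+r$ (equivalently $d - 2r \leq g - 3 - r$) keeps the relevant divisors special and leaves enough free cycles that the obstruction localizes at $\gamma_{i_0}$ rather than being absorbed elsewhere; this is exactly the regime complementary to the Martens-special configurations where the statement fails. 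An induction on the number of cycles, peeling off balanced end-cycles, appears to be the natural way to reduce to the single unbalanced cycle.

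The main obstacle is the universal quantifier hidden in $w_d^r$: it is not enough to compute a dimension, one must certify that \emph{no} rank-$r$ divisor of degree $d$ dominates the chosen $E$. This demands tight control of the rank function near $\gamma_{i_0}$, showing that every reduced representative of a class in $W_d^r(\Gamma)$ dominating $E$ is incompatible with the torsion order at $i_0$, together with a careful treatment of the boundary cases $d = 2r$ and $d = g-3+r$, where the combinatorial slack is smallest. Verifying that the constructed $E$ is a genuine obstruction, rather than one that some unexpected reduction dominates, is where the delicate, case-dependent analysis of the displacement tableaux will be concentrated.
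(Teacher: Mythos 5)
The paper does not prove this statement itself---it is quoted from Coppens---but it establishes a strictly more general result (Theorem A, via Propositions \ref{prp:induction} and \ref{prp:no_sep}) that implies it, so I will measure your attempt against that. Two genuine gaps stand out.

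First, your reduction misreads the definition of the Brill--Noether rank. By definition, $w_d^r(\Gamma)\ge\rho$ means that \emph{every} effective divisor of degree $r+\rho$ lies in the support of some class in $W_d^r(\Gamma)$; hence to prove $w_d^r(\Gamma)<d-2r$ you must produce an undominated effective divisor of degree $r+(d-2r)=d-r$, not of degree $d-2r$ as you state. This off-by-$r$ error propagates through the whole plan: your certificate divisor has $r$ too few points, and exhibiting an undominated divisor of degree $d-2r$ would actually be proving the stronger (and in general false) bound $w_d^r(\Gamma)\le d-3r-1$. Relatedly, your opening gloss of $w_d^r$ as ``the largest $n$ for which every $E$ of degree $n$ is dominated'' describes $r+w_d^r$, not $w_d^r$.

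Second, the core of the argument is not carried out. The entire difficulty is the universal quantifier you correctly identify at the end: one must verify that \emph{no} rank-$r$ class of degree $d$ dominates the chosen $E$. Your proposal locates this difficulty (``delicate, case-dependent analysis of the displacement tableaux'') but does not resolve it; ``pinning a chip at or near $\gamma_{i_0}$ and filling the remaining cycles generically'' is not yet a construction, and no verification is attempted. For comparison, the route taken in this paper is to first reduce to $r=1$ (Proposition \ref{prp:induction}: if $w_d^1(\Gamma)<d-2$ then $w_d^r(\Gamma)<d-2r$ for all admissible $r$), which removes the rank-$r$ combinatorics entirely, and then to build an explicit divisor of degree $d-1\le g-3$ with one point per cycle, generic edge-distances, and support avoiding the non-hyperelliptic cycle and suitable cycles adjacent to it---this is exactly where the hypothesis $d\le g-3+r$ enters---and finally to kill every completion $E+p$ by Dhar's burning algorithm. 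Adopting both the rank-one reduction and an explicit, checkable placement of the certificate divisor is what your sketch is missing.
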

 
However, Conjecture \ref{JLconj} is not true in its full generality. In \cite{coppens2024studyhmartenstheorem}, the author provides counterexamples to such a conjecture proving the existence of graphs of genus $g\geq 2r+3$ called \emph{Martens-special} chains of cycles which are not hyperelliptic and satisfy the equality $w^{r}_{g-2+r}(\Ga)=g-2-r$.

\begin{theo}{\cite[Theorem B]{coppens2024studyhmartenstheorem}}\label{th:Copp2}
A chain of cycles $\Ga$ satisfies $w^r_{g-2+r}(\Ga)=g-2-r$ for some integer $1\leq r \leq g-2$ if and only if either $\Ga$ is hyperelliptic or $\Ga$ is a Martens-special chain of cycles of rank $r$.
\end{theo}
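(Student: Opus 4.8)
The plan is to transfer the entire problem to the combinatorial model for divisors on chains of cycles of \cite{CDPR}, in which a divisor class of degree $d$ is represented by its reduced representative and the condition $\operatorname{rk}(D)\ge r$ is detected by a lingering lattice path (equivalently a rectangular displacement tableau). In this model the locus $W^r_d(\Ga)$, and hence the Brill--Noether rank $w^r_d(\Ga)$, depend only on the \emph{torsion profile} of $\Ga$, that is, on the torsion values $t_1,\dots,t_g$ attached to the $g$ loops. The first step is to make the target equality combinatorial: recalling from \cite{LPP} that $w^r_d(\Ga)$ is the largest integer $w$ such that a general effective divisor of degree $w$ is dominated by some class in $W^r_d(\Ga)$, the equality $w^r_{g-2+r}(\Ga)=g-2-r$ says exactly that a general effective divisor of degree $g-2-r$ sits below a divisor of degree $g-2+r$ and rank $r$.

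The second step is a simplification via tropical Riemann--Roch. For the extremal degree $d=g-2+r$ one has $\operatorname{rk}(D)-\operatorname{rk}(K_\Ga-D)=d-g+1=r-1$, so $\operatorname{rk}(D)\ge r$ is equivalent to $\operatorname{rk}(K_\Ga-D)\ge 1$, while $\deg(K_\Ga-D)=g-r$. Thus $D\mapsto K_\Ga-D$ identifies $W^r_{g-2+r}(\Ga)$ with the space of pencils $W^1_{g-r}(\Ga)$, and since $2\le g-r\le g-1$ we are placed in the well-understood regime of degree-$(g-r)$ pencils, whose lingering lattice paths are essentially one-dimensional walks read off directly from the torsion profile. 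Writing $D=K_\Ga-P$, the containment $D\ge E$ becomes $P\le K_\Ga-E$, so the Brill--Noether-rank equality translates into the requirement that, for a general effective $E$ of degree $g-2-r$, the degree-$(g+r)$ divisor $K_\Ga-E$ contains an effective degree-$(g-r)$ pencil. I would turn this into a concrete, checkable constraint on the $t_i$.

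For the backward implication the two families are handled separately. The hyperelliptic case gives the equality by the explicit hyperelliptic computation behind \cref{prp:ineq}: if $H$ is the $g^1_2$, then $rH+E\ge E$ has rank at least $r$ for every effective $E$ of degree $g-2-r$, forcing $w^r_{g-2+r}(\Ga)\ge g-2-r$, with the matching upper bound supplied by the Clifford-type inequality. For a Martens-special chain of cycles of rank $r$, defined by the particular torsion profile singled out in the construction, I would instead exhibit the required pencil directly and verify $w^r_{g-2+r}(\Ga)=g-2-r$ by an explicit lattice-path computation; this verification is precisely what certifies the Martens-special profiles as genuine (non-hyperelliptic) equality cases.

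The forward implication is the main obstacle, and it is here that the extremal degree is decisive. By \cref{th:Copp1} the equality can occur for $d\le g-3+r$ only in the hyperelliptic case, so the new behaviour is confined to the single boundary degree $g-2+r$, where the degree budget is loosened by exactly one unit. I would assume $w^r_{g-2+r}(\Ga)=g-2-r$, pass through the pencil reduction of the second step, and classify all torsion profiles for which $K_\Ga-E$ contains the required degree-$(g-r)$ pencil for general $E$. The delicate point is to show that this tight condition forces the profile to be either the hyperelliptic one or to deviate from it in exactly the controlled manner prescribed by the Martens-special construction, and to rule out every other profile; controlling the interplay between the one unit of slack in the degree and the torsion data, and proving that the resulting list of admissible profiles is complete, is the crux of the whole argument.
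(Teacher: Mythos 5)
First, a point of comparison: the paper does not prove this statement at all --- it is quoted verbatim as \cite[Theorem B]{coppens2024studyhmartenstheorem} and used as a black box (e.g.\ in the proofs of \cref{prp:main_tree_cycles} and \cref{th:main_tree_cycles}). So there is no in-paper argument to measure you against; your proposal has to stand on its own as a proof of Coppens' theorem, and as it stands it does not. Your Riemann--Roch reduction is sound ($\rr(D)\geq r$ with $\ddeg(D)=g-2+r$ is indeed equivalent to $\rr(K_\Gamma-D)\geq 1$ with $\ddeg(K_\Gamma-D)=g-r$), and the idea of working with the torsion profile $(m_2,\dots,m_{g-1})$ and displacement tableaux is consistent with the machinery the paper alludes to (a $2\times(r+1)$ tableau, since $g-d+r=2$ here). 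But the forward implication --- showing that the equality $w^r_{g-2+r}(\Gamma)=g-2-r$ forces the profile to be hyperelliptic or exactly Martens-special, and excluding every other profile --- is precisely where Definition~\ref{df:M_special}'s conditions ($j_1>r+1$, $j_k<g-r$, gaps $j_{i+1}-j_i\geq r+1$) must be extracted, and your proposal only announces that this classification ``is the crux'' without carrying out any of it. The backward implication for Martens-special chains is likewise deferred to ``an explicit lattice-path computation'' that is never performed. A strategy statement is not a proof of either direction.

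There are also two imprecisions that would bite you if you tried to execute the plan. The Brill--Noether rank in this paper is defined by requiring, for \emph{every} effective $E$ of degree $r+\rho$, some $D\in W^r_d(\Gamma)$ with $\operatorname{Supp}(E)\subseteq\operatorname{Supp}(D)$; your restatement in terms of a \emph{general} effective divisor weakens the condition and could admit spurious profiles in the classification. Second, support containment is not the same as $D\geq E$ (take $E=2p$), and after dualizing, ``$P\leq K_\Gamma-E$'' must be read as a statement about effective representatives of the class of $K_\Gamma-D$, not of a fixed divisor; this translation needs to be justified on the reduced-divisor model before the lattice-path analysis can even begin.
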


We further investigate this conjecture by first generalizing Coppens' counterexample of Martens-special chains of cycles to a class of graphs that we will call \emph{Martens-special trees of cycles}.
More precisely, for these graphs, we first prove the analog statements of Theorems \ref{th:Copp1} and \ref{th:Copp2}.
Moreover, we will show that these are not the only graphs for which Conjecture \ref{JLconj} fails. 
Indeed, we will see that also graphs that are obtained by changing the length of some edge in a hyperelliptic graph represent also a counterexample.
However, if we impose a stricter bound on the degree then Conjecture \ref{JLconj} is true.
The precise statement of our main theorem is the following.

\begin{maintheorem}\label{th:main_theo}
    Let $\Gamma$ be a metric graph of genus $g,$ and $d,r$ integers with $0<2r\leq d\leq g-3+r$.  Then 
    $w_d^r(\Gamma)\leq d-2r.$ Equality holds precisely when $\Gamma$ is hyperelliptic.
\end{maintheorem}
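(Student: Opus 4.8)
By \cref{prp:ineq} the inequality $w^r_d(\Gamma)\le d-2r$ and the implication ``hyperelliptic $\Rightarrow$ equality'' are already available, so the entire content of the theorem is the converse: under $1\le r$ and $2r\le d\le g-3+r$, I must deduce from $w^r_d(\Gamma)=d-2r$ that $\Gamma$ is hyperelliptic. The plan is to route this through the equality case of the tropical Clifford theorem. Unwinding the definition of the Brill--Noether rank, the hypothesis says that a general effective divisor $E$ of degree $d-2r$ is subordinate to a $g^r_d$: there is an effective divisor $D$ with $\deg D=d$, $\rr(D)\ge r$ and $D\ge E$; setting $F:=D-E$ gives $\deg F=2r$. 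The target of the argument is to show that $\Gamma$ must then carry a \emph{fixed} effective divisor $F_0$ of degree $2r$ with $\rr(F_0)\ge r$. Once this is in hand, $1\le r\le g-3$ places $\deg F_0=2r$ strictly between $0$ and $2g-2$, so the Clifford bound forces $\rr(F_0)\le\tfrac12\deg F_0=r$, hence $\rr(F_0)=r$, and the equality case of the tropical Clifford theorem (\cite{facchini}, \cite{Coppens_Clifford}) then forces $\Gamma$ to be hyperelliptic, finishing the proof.

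The extraction of $F_0$ is the heart of the matter, and I would isolate the rank-$1$ case as a model: there the hypothesis reads $w^1_{d'}(\Gamma)=d'-2$ with $2\le d'\le g-2$, the divisor $F$ has degree $2$, and the claim is simply that a general $E$ of degree $d'-2$ can be completed to a rank-$1$ divisor only if $\Gamma$ already admits a $g^1_2$, i.e.\ is hyperelliptic. One can then either build $F_0$ as $r$ copies of this $g^1_2$, or bypass the Clifford step entirely by inducting on $r$ directly down to this model case: descending from $(d,r)$ to $(d-2,r-1)$ is legitimate because $2(r-1)\le d-2\le g-3+(r-1)$ are exactly the inherited forms of $2r\le d\le g-3+r$, and the Brill--Noether number is unchanged since $(d-2)-2(r-1)=d-2r$. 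Either route reduces the theorem to a single rigidity phenomenon, provided one shows that equality of the Brill--Noether rank propagates under this descent.

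The hard part is exactly the claim that the rank of $D=E+F$ is carried by the fixed part $F$ rather than being distributed across the moving general divisor $E$. The crude bound $\rr(D-E)\ge\rr(D)-\deg E$ is hopelessly lossy, and the dimension of $W^r_d(\Gamma)$ cannot be substituted for it, since tropically it is precisely this dimension that misbehaves --- this is why the naive Martens analogue fails in \cite{L}. Moreover \cref{th:Copp2} shows that the rigidity I am after genuinely breaks down once $d=g-2+r$, a single unit above the threshold: on a Martens-special chain of cycles the equality $w^r_{g-2+r}(\Gamma)=g-2-r$ holds with the special behaviour spread out and no fixed $g^r_{2r}$ present. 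The full force of the hypothesis $d\le g-3+r$ must therefore be spent precisely here, to preclude this spreading: as $E$ ranges over its $(d-2r)$-dimensional family of general divisors, the degree bound is what leaves too little slack for the rank-$r$ condition to be met unless a single special divisor of degree $2r$ lies inside every competing $D$. Making this rigidity precise --- most plausibly through a careful analysis of reduced divisors and Dhar's burning algorithm, combined with the upper semicontinuity of the Brill--Noether rank --- is where I expect the real difficulty of the proof to lie.
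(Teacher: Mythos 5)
There is a genuine gap: the core of the theorem is identified but not proved. Your reduction to the case $r=1$ is sound in spirit and matches the paper, which proves exactly such a statement (\cref{prp:induction}: if $w^1_d(\Gamma)<d-2$ for a non-hyperelliptic $\Gamma$, then $w^r_d(\Gamma)<d-2r$ for all admissible $r$, proved by specializing the points $q_j=p_j$ in the definition of rank). But the substance of the theorem is the rank-one rigidity statement you defer to the last paragraph --- that a non-hyperelliptic $\Gamma$ with $d\le g-2$ admits an effective divisor $E$ of degree $d-1$ such that $\rr(E+p)=0$ for every $p\in\Gamma$ --- and you give no argument for it beyond naming the tools. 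This is precisely \cref{prp:no_sep} in the paper, and its proof is the bulk of the work: one places the points of $E$ in the interiors of edges lying uniquely on distinct cycles (\cref{lm:cycles}, via the complement of a spanning tree) with generic pairwise-distinct distances to the endpoints (properties (E1), (E2)), then uses the non-hyperellipticity of $\Gamma$ to produce a distinguished cycle $\gamma$ of one of the types (NH1)a, (NH1)b, (NH2), and runs a delicate case analysis with Dhar's burning algorithm to show any candidate $E+p$ reduces to a divisor that burns. None of this is present in your proposal, so the proof is incomplete where it matters most.

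A secondary but concrete error undermines your alternative route through Clifford's theorem: you unwind the definition of the Brill--Noether rank incorrectly. The hypothesis $w^r_d(\Gamma)=d-2r$ concerns effective divisors $E$ of degree $r+\rho=r+(d-2r)=d-r$, not $d-2r$; consequently $F:=D-E$ has degree $r$, not $2r$, and there is no candidate $g^r_{2r}$ to feed into the equality case of Clifford. Even after correcting the degrees, the claim that the rank of $D$ is ``carried by a fixed part'' is exactly the rigidity you acknowledge you cannot establish, and \cref{th:Copp2} shows it fails one degree above the threshold, so it cannot follow from soft arguments. Note also that the paper's logical direction is the reverse of yours: it derives the relevant case of tropical Clifford as a corollary of the Martens-type statement, rather than using Clifford's equality case as an input.
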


This in particular provides an alternative proof of the tropical Clifford's theorem when $d,r,g$ are positive integers satisfying $2r\leq d\leq g-3+r$.

We will first recall in Section \ref{sc:div_theory} the notions and definitions of divisor theory on metric graphs. 
In Section \ref{sc:Coppens}, we will present Coppens' counterexample for Conjecture \ref{JLconj} and construct  new classes of graphs for which the conjecture is again not true. Finally, in Section \ref{sc:main_proof} we will prove that, restricting to $d\leq g-3+r$, the conjecture holds.

\medskip
\noindent 
{\bf Acknowledgements.} We thank Yoav Len and Margarida Melo for their insightful comments and suggestions.
We are particularly grateful to Marc Coppens for his detailed review and extensive feedback on earlier versions of the article, as well as his substantial contributions throughout its development.
\section{Divisor theory on metric graphs}\label{sc:div_theory}
In this section, we recall the basic definitions and properties of divisor theory on metric graphs that we need throughout the article. For more details, see \cite{BN09}, \cite{BF11}, \cite{BC}, and \cite{Len_Survey}.

A \emph{metric graph} is a metric space $\Ga$ such that there exist a graph $G$ and a length function on its set of edges $l\colon E(G)\to \R_{>0}$ such that $\Ga$ is obtained from $(G,l)$ by gluing intervals $[0,l(e)]$ for  $e\in E(G)$ at their endpoints, as prescribed by the combinatorial data of $G$.
We say that $(G,l)$ is a \emph{model} for $\Ga$ and the \emph{genus} of $\Ga$ is the first Betti number of $G$. Notice that the genus is independent of the choice of the model.
We will only consider finite, connected and weightless graphs.

The \emph{distance} $d(x, y)$ between two points $x,y\in\Ga$ is the length of the shortest path between them. 
For any point $x$, its \emph{valence}, denoted  $\val(x)$, is the number of connected components in $U_x\setminus\{x\}$, where $U_x$ is an arbitrarily small open neighborhood containing $x$.   

The \emph{canonical model} $(G_0,l_0)$ of $\Ga$ is the model obtained by taking as its set of vertices
$
V(G_0)=\{x\in\Ga|\,\mathrm{val}(x)\geq 3 \}.
$
Unless otherwise specified, we will work with the canonical model of a metric graph. In particular, we say that $G_0$ is the \emph{underlying combinatorial graph} of $\Ga$.

Moreover, let us recall that a metric graph is said to be \emph{$k$-edge connected} if the removal of any subset of $l\leq k-1$ edges in its canonical model does not disconnect the graph. A \emph{$k$-edge cut} is a set of $k$ edges, in the canonical model, whose removal disconnects the graph. All incident edges to a vertex clearly form a $k$-edge cut, this will be called \emph{trivial}.
 
A \emph{divisor} $D$ is a formal sum $D=\sum_{p\in \Gamma}a_pp,$ with $a_p\in\ZZ$ and $a_p\neq0$ for finitely many $p\in \Ga$.
Its \emph{degree} is $\deg(D)=\sum_{p\in\Gamma}a_p$ and the \emph{group of divisors} of $\Gamma$ is $\mathrm{Div}(\Gamma),$ 
and it decomposes as 
$\operatorname{Div}(\Gamma)=\cup_{d\in \mathbb Z} \operatorname{Div}^d(\Gamma),$
where $\operatorname{Div}^d(\Gamma)$ is the group of divisors of degree $d$.
A divisor $D=\sum_{p\in \Gamma}a_pp $ is called \emph{effective}, and we write $D\geq 0,$ if 
$a_p\geq 0$, for all $p\in \Ga$.
We also define the \emph{support} of $D$ as the set $\operatorname{Supp}(D)=\{p\in\Ga|a_p\neq 0\}.$ 

A \emph{rational function} on $\Gamma$ is a continuous, piecewise linear function $f\colon \Gamma\to \R$  whose slopes are all integers. 
The \emph{principal divisor} associated to the rational function $f$ is
	$
	\ddiv(f)=\sum_{P\in \Gamma}\sigma_p(f)p,
	$
	 where $\sigma_p(f)$ is the sum of the slopes of $\Gamma$ in all directions emanating from $p$. 
Two divisor $D_1,D_2$ are \emph{linearly equivalent}, and we write $D_1\sim D_2$, if their difference is a principal divisor.

The (\emph{Baker and Norine}) \emph{rank} of a divisor $D$ on $\Gamma$ is
	  \begin{equation*}
	  \begin{aligned}
	  &\rr(D)=\max\{k\in \ZZ: \forall\,E\in\mathrm{Div}^k(\Gamma); E\geq 0, \exists E'\geq 0 \text{ such that } D-E\sim E' \}, 
	  \end{aligned}
	  \end{equation*}
       or $\operatorname{r}(D)=-1$ if the above set is empty.

From \cite[Corollary]{BN09}, the contraction of bridges, i.e. $1$-edge cuts, does not change the rank. From now on we will only consider metric graphs without points of valence $1$  (for metric graphs with leaves, results on their Brill--Noether loci are the same).

Let us recall the Riemann--Roch theorem for metric graphs, first proved for combinatorial graphs in \cite{BN07}, that will be used throughout the paper. 

\begin{theo}{\cite{GK08,MikZhark}}
Let $D\in \operatorname{Div}(\Ga),$ and $K$ be the canonical divisor of $\Ga.$ Then
    $$\rr(D)-\rr(K-D) = \ddeg(D) + 1 -\g(\Ga).$$
\end{theo}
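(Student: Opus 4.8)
\section*{Proof proposal}

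The plan is to adapt the Baker--Norine combinatorial proof for finite graphs to the metric setting; the heart of the matter is a symmetry of the non-special divisor classes of degree $\g(\Ga)-1$ under the involution $D\mapsto K-D$. Two preliminary reductions simplify the problem. First, since $\rr(D)$, $\rr(K-D)$ and $\ddeg(D)$ all depend only on the linear equivalence class of $D$, it suffices to verify the identity on one representative of each class. Second, taking $K=\sum_{v\in V(G_0)}(\val(v)-2)\,v$ on the canonical model $G_0$, the handshake identity $\sum_v\val(v)=2|E(G_0)|$ together with $\g(\Ga)=|E(G_0)|-|V(G_0)|+1$ yields
$$\ddeg(K)=\sum_{v}(\val(v)-2)=2\bigl(|E(G_0)|-|V(G_0)|\bigr)=2\g(\Ga)-2,$$
so that $\ddeg(K-D)=2\g(\Ga)-2-\ddeg(D)$. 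In particular both sides of the asserted formula are negated by $D\mapsto K-D$, which already shows that it is enough to treat divisors of degree at most $\g(\Ga)-1$.

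The engine of the proof is the theory of reduced divisors. Fixing a base point $q\in\Ga$, I would establish that every divisor class contains a unique $q$-reduced representative, obtained algorithmically by Dhar's burning algorithm adapted to the metric graph (chip-firing being performed over arbitrary closed connected subsets rather than over vertex subsets). The decisive consequence is an effectivity test: $\rr(D)\geq 0$ if and only if the coefficient at $q$ of the $q$-reduced divisor equivalent to $D$ is non-negative. More generally, $\rr(D)\geq s$ if and only if $D-E$ is equivalent to an effective divisor for every effective $E$ of degree $s$, a condition one checks divisor by divisor via the reduced form. This reduces all rank computations to membership in the set of classes equivalent to an effective divisor.

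The crux is to control the complementary set $\mathcal N$ of divisor classes of degree $\g(\Ga)-1$ that are \emph{non-special}, i.e. with $\rr=-1$, and to prove that $[D]\in\mathcal N$ if and only if $[K-D]\in\mathcal N$. I would realize each such class through an acyclic orientation $\mathcal O$ of $G_0$ with a unique source $q$, via the divisor $\nu_{\mathcal O}=\sum_v(\operatorname{indeg}_{\mathcal O}(v)-1)\,v$: acyclicity and the unique source make $\nu_{\mathcal O}$ its own $q$-reduced form with coefficient $-1$ at $q$, hence non-special, and conversely the $q$-reduced representative of any class in $\mathcal N$ arises this way. The symmetry is then the orientation-reversal involution $\mathcal O\mapsto\overline{\mathcal O}$: since $\operatorname{indeg}_{\overline{\mathcal O}}(v)=\val(v)-\operatorname{indeg}_{\mathcal O}(v)$, a direct computation gives
$$\nu_{\overline{\mathcal O}}=\sum_v\bigl(\val(v)-\operatorname{indeg}_{\mathcal O}(v)-1\bigr)v=\sum_v(\val(v)-2)v-\nu_{\mathcal O}=K-\nu_{\mathcal O},$$
so $K-\nu_{\mathcal O}$ is again of orientation type and hence non-special. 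This establishes $\rr(D)=-1\iff\rr(K-D)=-1$ for divisors of degree $\g(\Ga)-1$, which is exactly the asserted identity in that degree.

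It remains to bootstrap from degree $\g(\Ga)-1$ to arbitrary degree. Following Baker--Norine, I would express $\rr(D)+1$ as the minimal degree of an effective divisor $E$ for which $D-E$ becomes non-special, and then use the degree-$(\g(\Ga)-1)$ symmetry to convert a question about $D$ into the complementary question about $K-D$; comparing the two extremal problems gives $\rr(D)-\rr(K-D)=\ddeg(D)+1-\g(\Ga)$ by a counting argument directly from the definition of rank. The main obstacle is the passage from finite to metric graphs: divisors and chip-firing on $\Ga$ live on a continuum rather than on a fixed finite vertex set, so one must prove existence, uniqueness and termination for reduced divisors and for Dhar's algorithm in this setting, and ensure that the acyclic-orientation description of $\mathcal N$ is insensitive to the choice of model, so that subdividing an edge creates no genuinely new non-special classes. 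The standard resolution is to show that every relevant class has a representative supported on the vertices of a sufficiently fine model and that the rank computed there agrees with the Baker--Norine rank on the corresponding finite graph; verifying this compatibility, rather than the combinatorics of a single graph, is where the real work lies.
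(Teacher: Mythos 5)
The paper itself contains no proof of this statement: it is imported verbatim as the tropical Riemann--Roch theorem of Gathmann--Kerber and Mikhalkin--Zharkov, so your proposal can only be measured against the proofs in those references. In outline you have reconstructed exactly their route --- the Baker--Norine argument (reduced divisors via Dhar's algorithm, the set $\mathcal N$ of non-special classes of degree $\g(\Ga)-1$ realized by orientations, the involution $D\mapsto K-D$ on $\mathcal N$, then the extremal characterization of rank) transported to the metric setting, which is how Gathmann--Kerber proceed by reduction to the finite-graph theorem, and close in spirit to Mikhalkin--Zharkov's intrinsic adaptation.

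One step fails as literally stated: you claim that the $q$-reduced representative of \emph{every} class in $\mathcal N$ is $\nu_{\mathcal O}$ for an acyclic orientation $\mathcal O$ of the canonical model $G_0$. On a metric graph $\mathcal N$ is a continuum (the complement of the theta divisor in a $g$-dimensional real torus), while $G_0$ admits only finitely many orientations, so the ``conversely'' direction is false for any fixed model. The repair, which your last paragraph gestures at without carrying out, is to allow acyclic orientations of arbitrary models whose vertex set contains the support of the reduced representative; this is harmless for your symmetry computation because subdivision adds only $2$-valent points, at which $K$ has coefficient $\val - 2 = 0$ and $\operatorname{indeg}-1 = 0$, so $\nu_{\overline{\mathcal O}} = K - \nu_{\mathcal O}$ survives refinement. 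Likewise, ``the rank on a sufficiently fine model agrees with the Baker--Norine rank'' is only correct after rescaling to integer lengths and subdividing into unit edges (this is the Hladk\'y--Kr\'al'--Norine compatibility theorem), with an approximation argument for irrational lengths as in Gathmann--Kerber; you correctly identify this as where the real work lies, but it remains undone. Finally, your bootstrap can be made precise without any ``counting argument'': writing $\rr(D)+1$ as the minimum of $\ddeg^+(D'-\nu)$ over representatives $D'\sim D$ and $\nu\in\mathcal N$ (where $\ddeg^+$ is the degree of the positive part), the identity $\ddeg^+(D'-\nu)-\ddeg^+(\nu-D')=\ddeg(D)-\g(\Ga)+1$ holds for \emph{every} such pair, so the involution $\nu\mapsto K-\nu$ of $\mathcal N$ yields the theorem in one line once the characterization of $\mathcal N$ is established.
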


It is an immediate consequence of the  Riemann--Roch theorem that if $\ddeg(D)\geq \g(\Ga)+1$, then $\rr(D)\geq 1$. 

In order to determine if the rank of a given divisor is greater or equal than $1,$ we will also make extensive use of Dhar's burning algorithm, \cite{D}.
Given an effective divisor $D\in\operatorname{Div}(\Gamma)$ and a point $w\in \Gamma$, Dhar's burning algorithm consists in starting a fire at $w,$ which spreads along each edge incident to $w$. Any $x\in \Gamma$ with $D(x)>0$ can control the fire in $D(x)$ directions: if $D(x)$ is bigger or equal than the number of burnt edges incident to $x$ then the fire stops at $x$, otherwise it burns $x$ and continues to spread along all remaining edges incident to $x$.
Following \cite{BS}, if the entire graph burns, then $D-w$ is a $w$-reduced divisor and in particular $\rr(D-w)=-1$, meaning that it cannot be linearly equivalent to an effective divisor, therefore $\operatorname{r}(D)<1$.

\begin{defin}
  For non-negative integers $r$ and $d$, the \emph{Brill--Noether locus}  $W^r_d(\Gamma)$ is the set of divisor classes of degree $d$ and rank at least $r$ on the metric graph $\Ga$.
\end{defin}

As already mentioned in the introduction, by \cite[Theorem A.1]{L}, unlike the algebraic case, it is not true that $\operatorname{dim}W_d^r=d-2r$ only for hyperelliptic graphs. Here is an example.

\begin{ex}\label{ex.counterexnonhyp}
    Figure \ref{fg:counterex1} shows a non-hyperelliptic metric graph with two divisors $x+y+z_i\in W_3^1(\Gamma),$ for $i=1,2.$
Moreover, one can check that any $x+y+z$; $z\in \overline{z_1z_2}$, is a divisor of degree $3$ and rank $1$, therefore $\operatorname{dim}W_3^1(\Gamma)= 3-2\cdot1.$

\begin{figure}[!ht]
\centering
\begin{tikzcd}
\begin{tikzpicture}
    
    \draw(-0.5,0)--(0,2);
    \vertex{-0.5,0}
    \vertex{2.5,0}
    \draw(0,1)--(-0.5,0);
    \draw(-0.5,0)--(2.5,0);
    \draw(0,2)--(2,2);
    \draw(0,1)--(2,1);
    \draw(0,2)--(0,1);
    \draw(2,2)--(2,1);
    \draw(2,2)--(2.5,0);
    \draw(2.5,0)--(2.2,0.6);
    \draw(2,1)--(2.5,0);
    \foreach \i in {0,2} {\foreach \j in {1,2} {\vertex{\i, \j}}}
    \divisor{2,0}
    \divisor{2,1}
    \divisor{2,2}
    \draw (2,2) node[anchor=south ] {$x$};
    \draw (2,1) node[anchor=north ] {$y$};
    \draw (2,0) node[anchor=north ] {$z_1$};
    \end{tikzpicture}&
    \begin{tikzpicture}
    
    \draw(-0.5,0)--(0,2);
    \vertex{-0.5,0}
    \vertex{2.5,0}
    \draw(0,1)--(-0.5,0);
    \draw(-0.5,0)--(2.5,0);
    \draw(0,2)--(2,2);
    \draw(0,1)--(2,1);
    \draw(0,2)--(0,1);
    \draw(2,2)--(2,1);
    \draw(2,2)--(2.5,0);
    \draw(2.5,0)--(2.2,0.6);
    \draw(2,1)--(2.5,0);
    \foreach \i in {0,2} {\foreach \j in {1,2} {\vertex{\i, \j}}}
    \draw (2,2) node[anchor=south ] {$x$};
    \draw (2,1) node[anchor=north ] {$y$};
    \draw (2.5,0) node[anchor=north ] {$z_2$};
    \divisor{2.5,0}
    \divisor{2,1}
\divisor{2,2}\end{tikzpicture}\end{tikzcd}\caption{A metric graph $\Gamma$ with two non-equivalent divisors of degree 3 and rank 1. Here $\operatorname{dim}W^1_3(\Gamma)=1.$}\label{fg:counterex1}\end{figure}
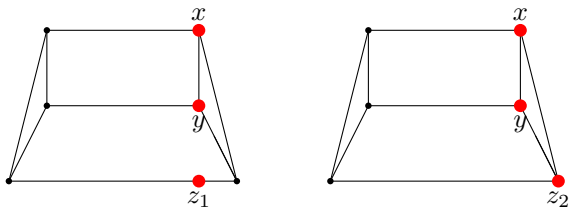
\end{ex}

The {Brill--Noether rank} is defined as follows.
\begin{defin}{\cite[Definition A.2]{L}} The \emph{Brill--Noether rank} of a metric graph $\Gamma$ is the integer$$w_d^r(\Gamma):=\operatorname{max}\{\rho; \text{ for any } E\in Div_{+}^{r+\rho}(\Gamma); \exists \,D \in W_d^r(\Gamma) \text{ such that }\operatorname{Supp}(E)\subseteq \operatorname{Supp}(D)\}.$$\end{defin}

\section{Counterexamples for Jensen--Len's conjecture }\label{sc:Coppens}
In this section we first recall the metric graphs studied by Coppens in \cite{coppens2024studyhmartenstheorem} and \cite{Copp2}, namely \emph{Martens-special chains of cycles}. 
Coppens proved that for such graphs Conjecture \ref{JLconj} does not always hold. 
Later, we will also provide other counterexamples, first by slightly modifying such graphs. 

\subsection{Martens-special chains of cycles}
Let us first recall the definition of Martens-special chains of cycles, as defined by Coppens.
Let $\Gamma$ be a chain of cycles of genus $g.$ Then $\Gamma$ has $g$ cycles $C_1,\dots,C_{g},$ where $C_1,C_g$ are loops and the cycles $C_2,\dots, C_{g-1}$ have vertices $v_2,w_2,\dots,v_{g-1},w_{g-1}$ as in Figure \ref{fg:chain_cycles}.

\begin{figure}[h!]
\centering
\begin{tikzcd}
\begin{tikzpicture}
    \draw (-0.5,0)circle (0.5);
    \draw (0,0)--(0.75,0);
    \draw (1.5,0) circle (0.75);
    \draw (2.25,0)--(3,0);
    \draw (3.5,0) circle (0.5);
    \draw (4,0)--(4.5,0);
    \draw (5,0) node {$\dots$};
    \draw(5.5,0)--(6,0);
    \draw (6.5,0)circle (0.5);
    \vertex{0.75,0}\vertex{2.25,0}
    \vertex{3,0}\vertex{4,0}
    \draw (-0.5,0) node {$C_1$};
    \draw (1.5,0) node {$C_2$};
    \draw (3.5,0) node {$C_3$};
    \draw (6.5,0) node {$C_g$};
    \draw (0.55,0) node[anchor=south]{$v_2$};
    \draw (2.5,0) node[anchor=north ] {$w_2$};
    \draw (2.8,0) node[anchor=south ] {$v_3$};
    \draw (4.2,0) node[anchor=north ] {$w_3$};
    \end{tikzpicture}\end{tikzcd}\caption{A chain of cycles of genus $g$.}\label{fg:chain_cycles}
\end{figure}
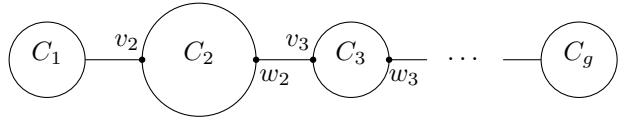
Recall that contraction of bridges does not change the rank, so we will assume that the bridges have arbitrary lengths (also zero).

For $i=2\dots, g-1,$ denote by $l_i$ the length of the $i$-th cycle and define the integers
\begin{equation}\label{eq:m}
m_i=
\begin{cases}
\frac{l_i}{d(v_i,w_i)}& \text{ if }\frac{l_i}{d(v_i,w_i)}\in \mathbb Z_{>0}, \\
0 &\text{otherwise.}
\end{cases}
\end{equation}

Let us observe that $m_i=2$ for any $i=2\dots, g-1,$ if and only if the chain of cycle is hyperelliptic, \cite[Section 3]{MC}. Moreover, we will simply write \emph{hyperelliptic cycle} when referring to a
 cycle $C_1,C_g$ or $C_i$ with $m_i=2.$ We will say that it is non-hyperelliptic otherwise. In what follows, we will consider non-hyperelliptic chains of cycles. We refer to \cite[Section 4]{coppens2024studyhmartenstheorem} for more details.

\begin{defin}\label{df:M_special}
    If $\Gamma$ is a chain of cycles of genus $g\geq 2r+3$ then it is \emph{Martens-special} of rank $r$ and type $k\geq1$ if there exist integers $j_1,\dots j_k$ such that 
    \begin{enumerate}
    \item $r+1<j_1<\dots<j_k<g-r;$
    \item $j_{i+1}-j_i\geq r+1$ for each $1\leq i\leq k-1$;
    \item $m_s\neq2$ if $s=j_1,\dots,j_k$ and $m_s=2$ if $s\neq j_1,\dots,j_k$.
    \end{enumerate}
\end{defin}

From the definition, one can think of a Martens-special chain of cycle of rank $r$ and type $k$ as a chain of cycles with $k$ non-hyperelliptic cycles, such that at least the first $r+1$ and the last $r+1$ cycles are hyperelliptic and between any two non-hyperelliptic cycles, there are at least $r$ hyperelliptic ones. 
Let us notice, moreover, that a Martens-special chain of rank $r,$ then satisfies the above conditions for any $r',$ with $1\leq r'\leq r.$ 

Coppens proved that on such graphs Conjecture \ref{JLconj} holds for $d\leq g-3+r$ while they are counterexamples when $d=g-2$ and $g\geq 2r+3$ (see Theorems \ref{th:Copp1}, \ref{th:Copp2}). For chains of cycles of genus $g$ there is a necessary and sufficient condition for the existence of a divisor 
of degree $d$ and rank $r$, which is the existence of a function on a rectangle with
$g-d+r$ columns and $r+1$ rows
called $\underline{m}$-displacement tableau, where $\underline{m}$ is the vector defined by \eqref{eq:m}. For more details see \cite{CDPR}, \cite[Theorem 1]{coppens2024studyhmartenstheorem}.

We now provide an explicit example of a Martens-special chain of cycles and prove that the equality in Theorem \ref{th:Copp2} holds for $r=1$.
We first notice that, given a divisor $D$ of degree $d\leq g$ on a chain of cycles we can always characterize it as follows.

\begin{defin}
    We say that an effective divisor $D$ on a chain of cycles $\Ga$ is \emph{cycle-reduced} if $\operatorname{deg}(D|_{\gamma})\leq 1$ for any cycle $\gamma\in \Gamma$ and $\operatorname{deg}(D|_{b})=0$ for any bridge $b$. 
\end{defin}

\begin{lem}\label{lem:cyclereduced}
    Let $D$ be a divisor of degree $d\leq g$ on a chain of cycles of genus $g.$ Then there exist a cycle-reduced divisor $D'$ such that $D'\sim D.$
\end{lem}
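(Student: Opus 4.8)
The plan is to produce the cycle-reduced representative by explicit chip-firing, using the degree bound $d\le g$ only at the end, through a pigeonhole argument. First I would normalize the support: I claim $D$ is equivalent to an effective divisor carried by the cycles $C_1,\dots,C_g$, with no mass in the interior of the bridges. Indeed, each bridge is a $1$-edge cut, so if $p$ lies in the interior of a bridge $b$ and $S\subseteq\Ga$ is the closure of the component on one side of $p$, then $S$ meets its complement in the single point $p$; firing such a one-sided set slides the chip at $p$ toward the nearer cycle-vertex, and iterating pushes every chip off the bridges onto the junction vertices $v_i,w_i$. After this step $D\sim\sum_{i=1}^g D_i$ with each $D_i$ effective and supported on $C_i$; write $e_i=\ddeg(D_i)$, so $\sum_i e_i=d$.

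The core is a \emph{transport move}: if a cycle $C_i$ carries $e_i\ge 2$ chips, one of them can be pushed onto an adjacent cycle without changing the class. On the single circle $C_i$, degree-$1$ classes correspond bijectively to points (Abel--Jacobi for genus one), so I may slide the $e_i$ chips of $D_i$ to an equivalent configuration on $C_i$ (same degree and same class restricted to $C_i$, hence same class on $\Ga$) in which one chip sits exactly at the junction vertex $w_i$, the remaining $e_i-1\ge 1$ chips absorbing the change so that the per-cycle coordinate is preserved. A chip at the junction then slides across the bridge to the neighbouring junction $v_{i+1}\in C_{i+1}$ by firing the component on the far side of that $1$-edge cut; since both junctions contribute trivially to their respective cycle coordinates, the class is unchanged. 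The net effect is $e_i\mapsto e_i-1$, $e_{i+1}\mapsto e_{i+1}+1$, preserving effectivity and leaving all other cycles untouched. Conceptually this is the statement that the Jacobian of a chain of cycles splits as the product of the individual circles, so transport alters the degrees $e_i$ but not the class.

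I would then spread the chips out by induction on $g$. After normalization, look at the last cycle $C_g$ and the subchain $\Ga'=C_1\cup\dots\cup C_{g-1}$ of genus $g-1$, joined along one bridge. If $\ddeg(D|_{C_g})\ge 1$, then $D|_{\Ga'}$ has degree $\le d-1\le g-1$, so by induction it is equivalent on $\Ga'$ to a cycle-reduced divisor, and since functions on $\Ga'$ extend across the bridge and finitely many transport moves already bring $e_g$ down to $\le 1$, we conclude. If instead $\ddeg(D|_{C_g})=0$ while $d=g$, then $\Ga'$ carries degree $g>g-1$, so by pigeonhole some cycle of $\Ga'$ is overloaded; a single transport move pushes one chip rightward into $C_g$, after which $\ddeg(D|_{C_g})=1$ and $\ddeg(D|_{\Ga'})=g-1$, and induction applies. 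The base case $g=1$ is immediate, as then $d\le 1$.

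The main obstacle — the step requiring the most care — is the transport move: I must verify that rearranging the chips on one circle to seat a chip at its junction genuinely fixes the \emph{global} class and not merely the degree, and that the across-bridge slide is a bona fide principal divisor keeping the divisor effective throughout. Equivalently, one must make precise that linear equivalence on the chain decouples cycle by cycle, so that moving a chip between neighbours changes the degrees $e_i$ but not the class; it is here that the combinatorics of $1$-edge cuts and the genus-one class group of each circle are combined. Once this move is in hand, the hypothesis $d\le g$ enters only through the pigeonhole guaranteeing an empty cycle to absorb any excess, which is exactly what the spreading induction requires.
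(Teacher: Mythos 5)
Your argument is correct and follows essentially the same route as the paper's (much terser) proof: slide chips off the bridges, seat an excess chip at a junction vertex of an overloaded cycle using a second chip on that cycle to preserve the class, push it across the bridge, and use $d\le g$ plus pigeonhole to guarantee an empty cycle absorbs the excess. The only slip is the claim that a \emph{single} transport move fills $C_g$ in the case $d=g$, $\ddeg(D|_{C_g})=0$ --- the overloaded cycle need not be adjacent to $C_g$, and an empty intermediate cycle would stall the chip since a lone chip cannot traverse a circle --- but directing each excess chip toward the \emph{nearest} empty cycle (or simply iterating the move until the excess count drops) repairs this immediately.
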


\begin{proof}
    First, we may always assume that none of the points in the support of $D$ lies on a bridge.
    If two points are in the same cycle, one could take the linearly equivalent divisor where one of the two points is moved to the closest vertex and then again the linearly equivalent one obtained by moving such a point to the adjacent cycle through the bridge. Because of the condition on the degree, for any pairs of points in each cycle, there is always an empty one where we can move one of the two points.
\end{proof}

\begin{ex}\label{ex:Coppens}
We consider a chain of cycle of genus $5,$ with $m_3=3$ and $m_2=m_4=2$, as in Figure \ref{fg:ex_g5}.
One can check from Definition \ref{df:M_special} that it is Martens-special of rank and type $1.$ 
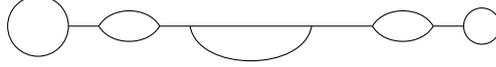
\begin{figure}[h!]
\centering
\begin{tikzcd}
\begin{tikzpicture}[scale=0.8]
    \draw (-0.5,0)circle (0.5);
    \draw (0,0)--(0.5,0);
    \draw (0.5,0) to [out=300, in=240] (1.5,0);
    \draw (0.5,0) to [out=60, in=120] (1.5,0);
    \draw (1.5,0)--(5,0);
    \draw (2,0) to [out=280, in=260] (4,0);
    \draw (5,0) to [out=300, in=240] (6,0);
    \draw (5,0) to [out=60, in=120] (6,0);
    \draw(6,0)--(6.5,0);\draw (6.8,0)circle (0.3);
    \end{tikzpicture}\end{tikzcd}\caption{A Martens-special chain of cycle of rank and type $1$ and genus $5$.}\label{fg:ex_g5}
\end{figure}

Let us show that $w^1_4=2,$ according to Theorem \ref{th:Copp2}.
By definition of Brill--Noether rank, this is equivalent to show that for any effective $E\in \operatorname{Div}^3(\Gamma)$, there exists $p\in\Gamma$ such that $E+p\in W_4^1(\Gamma).$

Let $E=\sum_{i=1}^3x_i$ for some $x_i\in \Gamma,$ and by Lemma \ref{lem:cyclereduced} we may assume that it is cycle-reduced. We have two possibilities.
\begin{enumerate}
\item If $\operatorname{deg}(E|_{C_3})=0$, we consider the point $x_i$ in the component of $\Gamma\setminus C_3$ where the degree of the divisor is 1 ($x_1$ in the Figure \ref{fg:ex_g5_p1}). Then we choose $p$ as the image of $x_i$ via the hyperelliptic involution of the component.
Then $E+p\sim 2v_3+x_j+x_k$ (or $E+p\sim 2w_3+x_j+x_k$) with $i\neq j\neq k$. 
The divisor $2v_3+x_{j}+x_{k}$ is linearly equivalent to divisors that restricted to each cycle have degree $2$. These linear equivalences are shown in Figure \ref{fg:ex_g5_p1}. Thus the rank of $E+p$ is $1.$
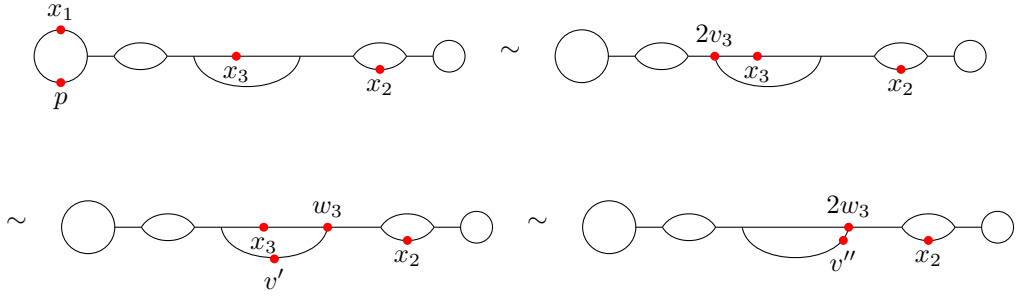
\begin{figure}[h!]
\centering
\begin{tikzcd}
\begin{tikzpicture}[scale=0.7]
    \draw (-0.5,0)circle (0.5);
    \draw (0,0)--(0.5,0);
    \draw (0.5,0) to [out=300, in=240] (1.5,0);
    \draw (0.5,0) to [out=60, in=120] (1.5,0);
    \draw (1.5,0)--(5,0);
    \draw (2,0) to [out=280, in=260] (4,0);
    \draw (5,0) to [out=300, in=240] (6,0);
    \draw (5,0) to [out=60, in=120] (6,0);
    \draw(6,0)--(6.5,0);\draw (6.8,0)circle (0.3);
    \divisor{7.1,0}
    \divisor{-0.5,0.5}\divisor{-0.5,-0.5}
    \divisor{5.5,-0.25}
     \draw (-0.5,0.5) node[anchor=south] {$x_1$};
    \draw (-0.5,-0.5) node[anchor=north] {$p$};
    \draw (5.5,-0.25) node[anchor=north] {$x_2$};
    \draw (7.1,0) node[anchor=west] {$x_3$};
    \end{tikzpicture} \sim \quad
    \begin{tikzpicture}[scale=0.7]
    \draw (-0.5,0)circle (0.5);
    \draw (0,0)--(0.5,0);
    \draw (0.5,0) to [out=300, in=240] (1.5,0);
    \draw (0.5,0) to [out=60, in=120] (1.5,0);
    \draw (1.5,0)--(5,0);
    \draw (2,0) to [out=280, in=260] (4,0);
    \draw (5,0) to [out=300, in=240] (6,0);
    \draw (5,0) to [out=60, in=120] (6,0);
    \draw(6,0)--(6.5,0);\draw (6.8,0)circle (0.3);
    \divisor{7.1,0}
    \divisor{2,0}
    \divisor{5.5,-0.25}
     \draw (2,0) node[anchor=south] {$2v_3$};
    \draw (5.5,-0.25) node[anchor=north] {$x_2$};
    \draw (7.1,0) node[anchor=west] {$x_3$};
    \end{tikzpicture}
    \\
    \sim\quad
    \begin{tikzpicture}[scale=0.7]
    \draw (-0.5,0)circle (0.5);
    \draw (0,0)--(0.5,0);
    \draw (0.5,0) to [out=300, in=240] (1.5,0);
    \draw (0.5,0) to [out=60, in=120] (1.5,0);
    \draw (1.5,0)--(5,0);
    \draw (2,0) to [out=280, in=260] (4,0);
    \draw (5,0) to [out=300, in=240] (6,0);
    \draw (5,0) to [out=60, in=120] (6,0);
    \draw(6,0)--(6.5,0);\draw (6.8,0)circle (0.3);
    \divisor{7.1,0}
    \divisor{3,-0.6}\divisor{4,0}
    \divisor{5.5,-0.25}
     \draw (3,-0.6) node[anchor=north] {$v'$};
     \draw (4,0) node[anchor=south] {$w_3$};
    \draw (5.5,-0.25) node[anchor=north] {$x_2$};
    \draw (7.1,0) node[anchor=west] {$x_3$};
    \end{tikzpicture}
    \sim\quad
    \begin{tikzpicture}[scale=0.7]
    \draw (-0.5,0)circle (0.5);
    \draw (0,0)--(0.5,0);
    \draw (0.5,0) to [out=300, in=240] (1.5,0);
    \draw (0.5,0) to [out=60, in=120] (1.5,0);
    \draw (1.5,0)--(5,0);
    \draw (2,0) to [out=280, in=260] (4,0);
    \draw (5,0) to [out=300, in=240] (6,0);
    \draw (5,0) to [out=60, in=120] (6,0);
    \draw(6,0)--(6.5,0);\draw (6.8,0)circle (0.3);
    \divisor{7.1,0}
    \divisor{3,-0.6}\divisor{5,0}
    \divisor{5.5,-0.25}
    \draw (3,-0.6) node[anchor=north] {$v'$};
    \draw (5.5,-0.25) node[anchor=north] {$x_2$};
    \draw (7.1,0) node[anchor=west] {$x_3$};
    \end{tikzpicture}
    \\
    \sim\quad
    \begin{tikzpicture}[scale=0.7]
    \draw (-0.5,0)circle (0.5);
    \draw (0,0)--(0.5,0);
    \draw (0.5,0) to [out=300, in=240] (1.5,0);
    \draw (0.5,0) to [out=60, in=120] (1.5,0);
    \draw (1.5,0)--(5,0);
    \draw (2,0) to [out=280, in=260] (4,0);
    \draw (5,0) to [out=300, in=240] (6,0);
    \draw (5,0) to [out=60, in=120] (6,0);
    \draw(6,0)--(6.5,0);\draw (6.8,0)circle (0.3);
    \divisor{7.1,0}
    \divisor{3,-0.6}\divisor{6,0}
    \divisor{5.5,0.25}
    \draw (3,-0.6) node[anchor=north] {$v'$};
    \draw (7.1,0) node[anchor=west] {$x_3$};
    \end{tikzpicture}
     \sim\quad
    \begin{tikzpicture}[scale=0.7]
    \draw (-0.5,0)circle (0.5);
    \draw (0,0)--(0.5,0);
    \draw (0.5,0) to [out=300, in=240] (1.5,0);
    \draw (0.5,0) to [out=60, in=120] (1.5,0);
    \draw (1.5,0)--(5,0);
    \draw (2,0) to [out=280, in=260] (4,0);
    \draw (5,0) to [out=300, in=240] (6,0);
    \draw (5,0) to [out=60, in=120] (6,0);
    \draw(6,0)--(6.5,0);\draw (6.8,0)circle (0.3);
    \divisor{7.1,0}
    \divisor{3,-0.6}\divisor{6.5,0}
    \divisor{5.5,0.25}
    \draw (3,-0.6) node[anchor=north] {$v'$};
    \draw (7.1,0) node[anchor=west] {$x_3$};
    \end{tikzpicture}
    \end{tikzcd}\caption{The divisor $E+p$ and linearly equivalent divisors.}\label{fg:ex_g5_p1}
\end{figure}

\item If instead $\operatorname{deg}(E|_{C_3})=1$ we can choose $p$ as above, in any of the two components of $\Gamma\setminus C_3$. The rank of $E+p$ is again at least $1$, see Figure \ref{fg:ex_g5_p} for an example. 

\begin{figure}[h!]
\centering
\begin{tikzcd}
\begin{tikzpicture}[scale=0.7]
    \draw (-0.5,0)circle (0.5);
    \draw (0,0)--(0.5,0);
    \draw (0.5,0) to [out=300, in=240] (1.5,0);
    \draw (0.5,0) to [out=60, in=120] (1.5,0);
    \draw (1.5,0)--(5,0);
    \draw (2,0) to [out=280, in=260] (4,0);
    \draw (5,0) to [out=300, in=240] (6,0);
    \draw (5,0) to [out=60, in=120] (6,0);
    \draw(6,0)--(6.5,0);\draw (6.8,0)circle (0.3);
    \divisor{2.8,0}
    \divisor{-0.5,0.5}\divisor{-0.5,-0.5}
    \divisor{5.5,-0.25}
     \draw (-0.5,0.5) node[anchor=south] {$x_1$};
    \draw (-0.5,-0.5) node[anchor=north] {$p$};
    \draw (5.5,-0.25) node[anchor=north] {$x_2$};
    \draw (2.8,0) node[anchor=north] {$x_3$};
    \end{tikzpicture} \quad\sim\quad
    \begin{tikzpicture}[scale=0.7]
    \draw (-0.5,0)circle (0.5);
    \draw (0,0)--(0.5,0);
    \draw (0.5,0) to [out=300, in=240] (1.5,0);
    \draw (0.5,0) to [out=60, in=120] (1.5,0);
    \draw (1.5,0)--(5,0);
    \draw (2,0) to [out=280, in=260] (4,0);
    \draw (5,0) to [out=300, in=240] (6,0);
    \draw (5,0) to [out=60, in=120] (6,0);
    \draw(6,0)--(6.5,0);\draw (6.8,0)circle (0.3);
    \divisor{2.8,0}
    \divisor{2,0}
    \divisor{5.5,-0.25}
     \draw (2,0) node[anchor=south] {$2v_3$};
    \draw (5.5,-0.25) node[anchor=north] {$x_2$};
    \draw (2.8,0) node[anchor=north] {$x_3$};
    \end{tikzpicture}\\
    \sim\quad
    \begin{tikzpicture}[scale=0.7]
    \draw (-0.5,0)circle (0.5);
    \draw (0,0)--(0.5,0);
    \draw (0.5,0) to [out=300, in=240] (1.5,0);
    \draw (0.5,0) to [out=60, in=120] (1.5,0);
    \draw (1.5,0)--(5,0);
    \draw (2,0) to [out=280, in=260] (4,0);
    \draw (5,0) to [out=300, in=240] (6,0);
    \draw (5,0) to [out=60, in=120] (6,0);
    \draw(6,0)--(6.5,0);\draw (6.8,0)circle (0.3);
    \divisor{2.8,0}
    \divisor{4,0}
    \divisor{3,-0.6}
    \divisor{5.5,-0.25}
    \draw (3,-0.6) node[anchor=north] {$v'$};
    \draw (4,0) node[anchor=south] {$w_3$};
    \draw (5.5,-0.25) node[anchor=north] {$x_2$};
    \draw (2.8,0) node[anchor=north] {$x_3$};
    \end{tikzpicture}
\quad\sim\quad
    \begin{tikzpicture}[scale=0.7]
    \draw (-0.5,0)circle (0.5);
    \draw (0,0)--(0.5,0);
    \draw (0.5,0) to [out=300, in=240] (1.5,0);
    \draw (0.5,0) to [out=60, in=120] (1.5,0);
    \draw (1.5,0)--(5,0);
    \draw (2,0) to [out=280, in=260] (4,0);
    \draw (5,0) to [out=300, in=240] (6,0);
    \draw (5,0) to [out=60, in=120] (6,0);
    \draw(6,0)--(6.5,0);\draw (6.8,0)circle (0.3);
    \divisor{3.9,-0.25}
    \divisor{4,0}
    \divisor{5.5,-0.25}
    \draw (3.9,-0.25) node[anchor=north] {$v''$};
    \draw (4,0) node[anchor=south] {$2w_3$};
    \draw (5.5,-0.25) node[anchor=north] {$x_2$};
    \end{tikzpicture}\end{tikzcd}\caption{The divisor $E+p$ and linearly equivalent divisors.}\label{fg:ex_g5_p}
\end{figure}
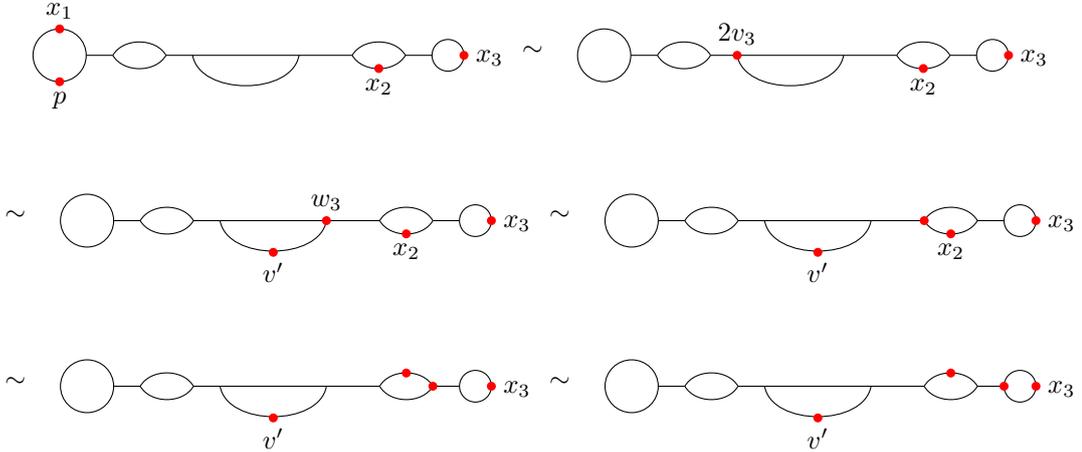
\end{enumerate}
\end{ex}

Let us stress that the argument in Example \ref{ex:Coppens} uses the fact that given any $3$ points in the graph, just by adding one point, we are able to define linearly equivalent divisors whose support has multiplicity (at least) $2$ in each cycle. 
Such a property is not natural, indeed if one slightly modifies the graph from Example \ref{ex:Coppens} the equality in Conjecture \ref{JLconj} will not hold, as in Example \ref{ex:Coppens_modified}.

\begin{ex}\label{ex:Coppens_modified}
Let us consider the graph of genus $5$ represented in Figure \ref{fg:ex_g5_mod}. It is made of cycles glued via bridges as the one in Example \ref{ex:Coppens}, but one cycle is defined by three edges instead of two.

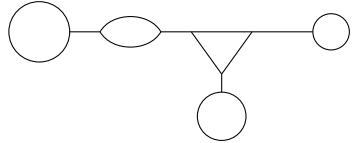
\begin{figure}[h!]
\centering
\begin{tikzcd}
\begin{tikzpicture}[scale=0.8]
    \draw (-0.5,0)circle (0.5);
    \draw (0,0)--(0.5,0);
    \draw (0.5,0) to [out=300, in=240] (1.5,0);
    \draw (0.5,0) to [out=60, in=120] (1.5,0);
    \draw (1.5,0)--(4,0);
    \draw (2,0)--(2.5,-0.7);\draw (3,0)--(2.5,-0.7);
    \draw (2.5,-1)--(2.5,-0.7);
    \draw (4.3,0)circle (0.3);
    \draw (2.5,-1.4) circle (0.4);
    \end{tikzpicture}\end{tikzcd}\caption{A metric graph of genus $5$ of cycles connected via bridges.}\label{fg:ex_g5_mod}
\end{figure}
We show that there exists an effective divisor of degree $3$ such that adding any fourth point does not yield a divisor whose support has multiplicity $2$ in any cycle, up to linear equivalence. In this case Conjecture \ref{JLconj} holds.

Let $E$ be the divisor defined by $3$ points $x_1,x_2,x_3,$ each in the interior of a distinct cycle, as in Figure \ref{fg:ex_g5_mod2}.

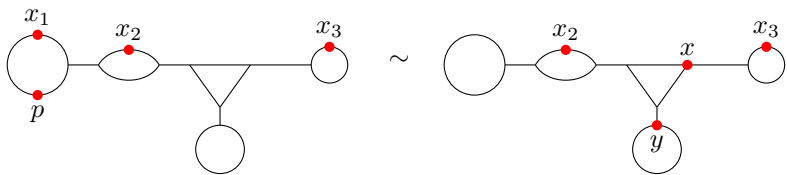
\begin{figure}[h!]
\centering
\begin{tikzcd}
    \begin{tikzpicture}[scale=0.8]
    \draw (-0.5,0)circle (0.5);
    \draw (0,0)--(0.5,0);
    \draw (0.5,0) to [out=300, in=240] (1.5,0);
    \draw (0.5,0) to [out=60, in=120] (1.5,0);
    \draw (1.5,0)--(4,0);
    \draw (2,0)--(2.5,-0.7);\draw (3,0)--(2.5,-0.7);
    \draw (2.5,-1)--(2.5,-0.7);
    \draw (4.3,0)circle (0.3);
    \draw (2.5,-1.4) circle (0.4);
    \divisor{-0.5,0.5};\divisor{1,0.25}\divisor{4.3,0.3}\divisor{-0.5,-0.5};
\draw (-0.5,0.5) node[anchor=south] {$x_1$};
\draw (-0.5,-0.5) node[anchor=north] {$p$};
    \draw (1,0.25) node[anchor=south] {$x_2$};
    \draw (4.3,0.3) node[anchor=south] {$x_3$};
    \end{tikzpicture}\quad\sim\quad
    \begin{tikzpicture}[scale=0.8]
    \draw (-0.5,0)circle (0.5);
    \draw (0,0)--(0.5,0);
    \draw (0.5,0) to [out=300, in=240] (1.5,0);
    \draw (0.5,0) to [out=60, in=120] (1.5,0);
    \draw (1.5,0)--(4,0);
    \draw (2,0)--(2.5,-0.7);\draw (3,0)--(2.5,-0.7);
    \draw (2.5,-1)--(2.5,-0.7);
    \draw (4.3,0)circle (0.3);
    \draw (2.5,-1.4) circle (0.4);
    \divisor{3,0};\divisor{1,0.25}\divisor{4.3,0.3}\divisor{2.5,-1};
\draw (3,0) node[anchor=south] {$x$};
\draw (2.5,-1) node[anchor=north] {$y$};
    \draw (1,0.25) node[anchor=south] {$x_2$};
    \draw (4.3,0.3) node[anchor=south] {$x_3$};
    \end{tikzpicture}
    \end{tikzcd}\caption{The divisor $E+p$ and a linearly equivalent one.}\label{fg:ex_g5_mod2}
\end{figure}

Let us notice that starting Dhar's burning algorithm, the whole graph burns unless we put an extra point $p,$ in the same cycle of some $x_i,$ at the same distance from the vertices within the cycle, as in Figure \ref{fg:ex_g5_mod2}. 

Then, $E+p\sim x_2+x_3+x+y$ and 
by starting Dhar's burning algorithm  from the loop containing $y$, the graph burns. 
This can be repeated for any other choice of $p$ which shows that $w_4^1(\Gamma)<2.$ 
\end{ex}

One could, however, wonder if Coppens' result generalizes to graphs obtained from Martens-special chain of cycles, by, instead of adding edges, gluing other chains of cycles along bridges.
We provide here an example of a graph $\Gamma$ obtained in this way, for which the equality in Theorem \ref{th:Copp2} applies, i.e. $w^1_{g-1}=g-3.$

\begin{ex}
Let us consider the metric graph $\Gamma$ represented in Figure \ref{fg:Hyp_tail}.
It is a Martens-special chain of cycles of rank and type $1$ and genus $7$, where we glue on one of its bridges a hyperelliptic chain of cycles, of genus $2.$ 
\begin{figure}[h!]
    \centering
    \begin{tikzcd}
    \begin{tikzpicture}[scale=0.8]
    \draw (-0.6,0)circle (0.6);
    \draw (0,0)--(0.5,0);
    \draw (0.5,0) to [out=300, in=240] (1.5,0);
    \draw (0.5,0) to [out=60, in=120] (1.5,0);
    \draw (1.5,0)--(5,0);
    \draw (6,0)--(8+2,0);
    \draw (5.5,0)circle (0.5);
    \draw (2,0) to [out=280, in=260] (4,0); 
    \draw (4.5,0)--(4.5,-1);
    \draw (4.5,-1.3)circle (0.3);
    \draw (4.5,-1.6)--(4.5,-2);
    \draw (4.5,-2.5)circle (0.5);
    \draw (5+2,0) to [out=280, in=260] (7+2,0);
    \draw (8+2,0) to [out=300, in=240] (9+2,0);
    \draw (8+2,0) to [out=60, in=120] (9+2,0);
    \draw(9+2,0)--(9.5+2,0);
    \draw (9.8+2,0)circle (0.3);
    \end{tikzpicture}\end{tikzcd}
    \caption{A metric graph $\Gamma$ obtained by gluing a Martens-special chain of cycle of rank $1$ and type $2$ and genus $7$ with a hyperelliptic chain of cycles of genus $2$ via bridges.}\label{fg:Hyp_tail}
\end{figure}
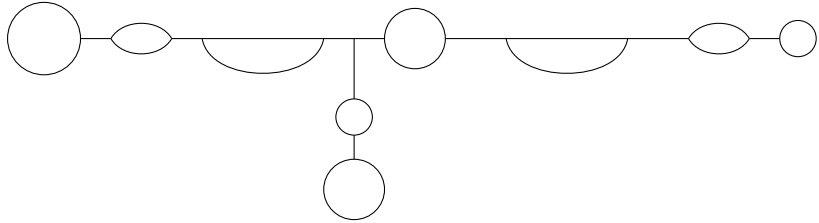

We will show that $w_{g-1}^1(\Gamma)={g-3},$
which means that any effective $E\in \operatorname{Div}^{g-2}(\Ga)$, can be completed to a divisor of rank $1$ by adding one point in its support.
For any such $E$ we consider its cycle-reduced representative, as in Figure \ref{fg:tree_cycles}.
\begin{figure}[h!]
    \centering
    \begin{tikzcd}
    \begin{tikzpicture}[scale=0.8]
    \draw (-0.6,0)circle (0.6);
    \draw (0,0)--(0.5,0);
    \draw (0.5,0) to [out=300, in=240] (1.5,0);
    \draw (0.5,0) to [out=60, in=120] (1.5,0);
    \draw (1.5,0)--(5,0);
    \draw (6,0)--(8+2,0);
    \draw (5.5,0)circle (0.5);
    \draw (2,0) to [out=280, in=260] (4,0); 
    \draw (4.5,0)--(4.5,-1);
    \draw(4.5,0) node [above] {$v$};
    \draw(5.1,-0.3) node [below] {$p$};
    \draw (4.5,-1.3)circle (0.3);
    \draw (4.5,-1.6)--(4.5,-2);
    \draw (4.5,-2.5)circle (0.5);
    \draw (5+2,0) to [out=280, in=260] (7+2,0);
    \draw (8+2,0) to [out=300, in=240] (9+2,0);
    \draw (8+2,0) to [out=60, in=120] (9+2,0);
    \draw(9+2,0)--(9.5+2,0);
    \draw (9.8+2,0)circle (0.3);
   \divisor{-1.2,0}\divisor{1,0.25}\divisor{8.5,0}\divisor{4.75,-2.1}\divisor{5.1,0.3}\divisor{5.1,-0.3}\divisor{10.5,0.25}\divisor{12.1,0}
    \end{tikzpicture}\end{tikzcd}\caption{}\label{fg:tree_cycles}
\end{figure}

Let us notice that, since $\operatorname{deg}E=g-2$, at least one of the three cycles incident to the bridges intersecting in the vertex $v$,
must contain a point in the support of $E$. Then,  we can always choose $p$ on such cycle so that $E+p\sim 2v + E'$, with $E'=E$ outside the cycle where we put $p$, and one can check that such a divisor has rank $1$. See for instance $E+p$ in Figure \ref{fg:tree_cycles}.
\end{ex}

We will generalize the result proved by Coppens for chains of cycles to metric graphs similar to the one in the above example. We will prove that for this class of graphs the Brill--Noether rank satisfies $w_{d}^r(\Ga)=d-2r$, when $d=g-2+r$, and hence they are counterexamples to Conjecture \ref{JLconj}. 

\subsection{Martens-special trees of cycles}

Let us first recall some definitions and notations from \cite[5.2]{GraphTheory} that we will use to define a new class of graphs.

A \emph{block} of a graph is a connected subgraph with no separating vertices and it is maximal with respect
to this property. 
Moreover, from \cite[5.2]{GraphTheory}, the blocks of a connected graph $G$ form a decomposition and fit together in a treelike structure. 
We associate to $G$ a bipartite graph $T_G$ with bipartition $(B,S)$, where $B$ is the set of blocks of $G$ and $S$ the set of separating vertices of $G$. A block $B$ and a separating vertex $v$ are adjacent in $T_G$ if and only if $B$ contains
$v$. 
The resulting graph $T_G$ is therefore a tree, called the \emph{block tree} of $G.$ 

With abuse of notation, we will use the same terms when referring to metric graphs. 

\begin{defin}
    A \emph{tree of cycles} is a metric graph $\Ga$ whose block-tree $T_{\Gamma}$ is such that 
    \begin{enumerate}
    \item any subgraph corresponding to a path in $T_{\Gamma}$ is a chain of cycles, up to contraction of bridges corresponding to the endpoints of the path and 
    \item any component in the block decomposition of $\Ga$ corresponding to a vertex in $T_{\Gamma}$ of valence at least $3$ is a vertex of the same valence in $\Ga$. 
    \end{enumerate}
\end{defin}

Notice that from the above definition the graph in Figure \ref{fg:ex_g5_mod} is not a tree of cycles, while the one in Figure \ref{fg:Hyp_tail} is a tree of cycles. Indeed, in Figure \ref{fg:ex_g5_mod}, the block tree has a unique vertex of valence $3$, which corresponds to the cycle defined by three edges, hence not a vertex.

\begin{rk}
    Let us observe that Lemma \ref{lem:cyclereduced} naturally extends to trees of cycles: given an effective divisor of degree equal or smaller than the genus of a metric tree of cycles we can always find a linearly equivalent divisor that is cycle-reduced.
\end{rk}

We will prove that the statement in Theorem \ref{th:Copp1} generalizes to trees of cycles. Let
us first show that any non-hyperelliptic metric graph that satisfies Conjecture \ref{JLconj} for $r=1$, also satisfies it for higher rank.
Then it is sufficient to consider $r=1.$

\begin{prop}\label{prp:induction}
Let $\Gamma$ be a non-hyperelliptic metric graph such that $w_d^1(\Gamma)<d-2.$ Then $w_d^r(\Gamma)<d-2r$ for any $r; 0<2r\leq d<g$.
\end{prop}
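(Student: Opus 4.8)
The plan is to prove the contrapositive: if $w_d^r(\Gamma)=d-2r$ holds for some $r$ with $0<2r\le d<g$, then $w_d^1(\Gamma)=d-2$, contradicting the hypothesis $w_d^1(\Gamma)<d-2$. I argue by descending induction on $r$. The base case $r=1$ is immediate, and the inductive step is the implication: for $r\ge 2$, $w_d^r(\Gamma)=d-2r$ implies $w_d^{r-1}(\Gamma)=d-2(r-1)$. Since \cref{prp:ineq} gives the upper bound $w_d^{r-1}(\Gamma)\le d-2(r-1)$ (valid because $0<2(r-1)\le d<g$), the step reduces to the single inequality
\[
w_d^r(\Gamma)\ge d-2r \;\implies\; w_d^{r-1}(\Gamma)\ge d-2(r-1)=(d-2r)+2 .
\]

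Unwinding the definition, $w_d^r(\Gamma)\ge d-2r$ says exactly: for every effective divisor $E$ of degree $r+(d-2r)=d-r$ there is an effective divisor $D$ of degree $d$ with $\operatorname{r}(D)\ge r$ and $\operatorname{Supp}(E)\subseteq\operatorname{Supp}(D)$. What I must establish is the same statement one rank lower: for every effective $E'$ of degree $(r-1)+(d-2r+2)=d-r+1$ there is an effective $D'$ of degree $d$ with $\operatorname{r}(D')\ge r-1$ and $\operatorname{Supp}(E')\subseteq\operatorname{Supp}(D')$. The only difference is that the test divisor $E'$ now carries one extra point, and the whole argument is about absorbing that point at the cost of a single unit of rank.

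Concretely, I would fix an effective $E'$ of degree $d-r+1$, pick any $q\in\operatorname{Supp}(E')$, and set $E:=E'-q$, effective of degree $d-r$. The hypothesis yields an effective $D$ of degree $d$ with $\operatorname{r}(D)\ge r$ and $\operatorname{Supp}(E)\subseteq\operatorname{Supp}(D)$. If $q\in\operatorname{Supp}(D)$, then $\operatorname{Supp}(E')=\operatorname{Supp}(E)\cup\{q\}\subseteq\operatorname{Supp}(D)$, and $D':=D$ works since $\operatorname{r}(D)\ge r\ge r-1$. Otherwise $q\notin\operatorname{Supp}(D)$, and here I use a counting observation: writing $E_{\mathrm{red}}$ for the reduced divisor on $\operatorname{Supp}(E)$, we have $D\ge E_{\mathrm{red}}$ and
\[
\deg(D-E_{\mathrm{red}})=d-|\operatorname{Supp}(E)|\ge d-(d-r)=r\ge 1,
\]
so there is a point $o\in\operatorname{Supp}(D-E_{\mathrm{red}})$, i.e. a unit of $D$ that can be removed without losing any point of $\operatorname{Supp}(E)$. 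Setting $D':=D-o+q$ then gives an effective divisor of degree $d$ whose support contains $\operatorname{Supp}(E)\cup\{q\}=\operatorname{Supp}(E')$, and the standard monotonicity of the Baker--Norine rank, $\operatorname{r}(D-o)\ge\operatorname{r}(D)-1$ together with $\operatorname{r}(D-o+q)\ge\operatorname{r}(D-o)$, yields $\operatorname{r}(D')\ge r-1$. This proves the displayed implication, hence the inductive step.

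I expect the genuinely delicate point to be precisely the case $q\notin\operatorname{Supp}(D)$: a representative of $|D|$ passing through $q$ need not retain the points of $\operatorname{Supp}(E)$, so one cannot simply ``move $D$ onto $q$.'' The inequality $\deg(D-E_{\mathrm{red}})\ge r\ge 1$ is what rescues the argument, since it guarantees a removable point $o$, and exchanging $o$ for $q$ costs at most one unit of rank --- exactly the budget freed by passing from $r$ to $r-1$. This is the numerical reason why the degree of the test divisor may grow by one while the required rank drops by one. I would also note that the argument uses the non-hyperelliptic hypothesis nowhere: it shows unconditionally that $w_d^1(\Gamma)<d-2$ forces $w_d^r(\Gamma)<d-2r$, so for the conjecture it indeed suffices to treat $r=1$.
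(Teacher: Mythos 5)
Your proof is correct, and it reaches the conclusion by a genuinely different organization than the paper's, even though both ultimately rest on the same mechanism: removing an effective divisor of degree $k$ lowers the Baker--Norine rank by at most $k$. The paper argues by contradiction in a single jump from rank $r$ to rank $1$: it fixes the witness divisor $E$ of degree $d-1$ that cannot be completed to rank $1$, splits it as $E=E'+x_1+\cdots+x_{r-1}$ with $\deg E'=d-r$, completes $E'$ to a rank-$r$ divisor $E'+p_1+\cdots+p_r$, and then subtracts $p_2,\dots,p_r$ and adds back the $x_j$ to exhibit $E+p_1$ of rank at least $1$, contradicting the choice of $E$. You instead prove the self-contained inductive implication $w_d^r(\Gamma)\geq d-2r\Rightarrow w_d^{r-1}(\Gamma)\geq d-2(r-1)$ by exchanging one ``free'' point of the completing divisor for the extra test point $q$. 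The trade-off is visible: your route must handle arbitrary test divisors $E'$ of degree $d-r+1$ (hence the case split on whether $q\in\operatorname{Supp}(D)$ and the counting via $E_{\mathrm{red}}$), whereas the paper only ever tests divisors contained in the fixed witness $E$; in exchange you obtain a cleaner standalone statement about how the Brill--Noether rank propagates in $r$, and your use of $E_{\mathrm{red}}$ makes explicit the support-versus-containment subtlety in the definition of $w_d^r$, which the paper glosses over when it writes the completing divisor in the form $E'+p_1+\cdots+p_r$. Your closing observation that the non-hyperelliptic hypothesis is never used applies equally to the paper's argument.
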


\begin{proof}
 By definition, $w_d^1(\Gamma)<d-2$ means that there exists an effective divisor $E$ of degree $d-1$ such that for any $p\in\Gamma,$ $\rr(E+p)=0$. 

Assume by contradiction that $w_d^r(\Gamma)=d-2r$ for some $r>1,$ i.e. any $E'$ of degree $d-r$ can be completed to a divisor of rank $r$ by adding $r$ points $p_1,\dots,p_r$. In particular we have that, for any $q_1,\dots,q_r\in\Gamma,$ 
\begin{equation}
\label{eq:ind}E'+p_1+\dots+p_r-q_1+\dots-q_r \sim E_1\geq 0.
\end{equation}

The assumption $r>1$ implies that $\operatorname{deg}E'<\operatorname{deg}E$, so we can choose $E'$ as any effective divisor contained in $E.$ For example $E=E'+x_1+\dots +x_{r-1}.$ Then, adding $x_1+\dots +x_{r-1}$ on both sides of \eqref{eq:ind} gives the linear equivalence
$$E+p_1+\dots+p_r-q_1+\dots-q_r \sim E_1+x_1+\dots+x_{r-1}.$$ 
Choose $q_j=p_j$ for $j\geq 2.$ Then 
$$E+p_1-q_1\sim E_1+x_1+\dots+x_{r-1}, $$ for any $q_1\in\Gamma$.

In other words, there exists $p_1$ such that $E+p_1\in W_d^1(\Gamma)$ contradicting the fact that $E$ cannot be completed to a divisor of rank $1.$ 
\end{proof}

\begin{prop}
Let $\Gamma$ be a tree of cycles  of genus $g$ and $d,r$ integers with $0< 2r \leq d\leq g-3+r$. Then $w^{r}_{d}(\Ga)\leq d-2r$ and equality holds if and only if $\Gamma$ is hyperelliptic.
\end{prop}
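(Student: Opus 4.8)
The plan is to treat the inequality and the equality separately, and for the equality to isolate the only hard direction. The bound $w^r_d(\Gamma)\le d-2r$, and the fact that equality holds when $\Gamma$ is hyperelliptic, are already contained in \cref{prp:ineq}; I would only add the observation that a tree of cycles is hyperelliptic exactly when all of its cycles are hyperelliptic (every $m_i=2$). This is because, by the very definition of a tree of cycles, each cycle meets the rest of $\Gamma$ in at most two points, which for a hyperelliptic cycle are antipodal and hence the fixed points of its reflection; the per-cycle reflections therefore glue to a global involution whose quotient is a tree. Thus the whole content is the converse: if $\Gamma$ is a \emph{non-hyperelliptic} tree of cycles then the inequality is strict.

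For the converse I would first use \cref{prp:induction} to reduce, as far as it reaches, to rank one: since strictness at rank $1$ propagates to every admissible rank at the same degree, it suffices to prove $w^1_d(\Gamma)<d-2$ for non-hyperelliptic trees of cycles and $2\le d\le g-2$. Unwinding the definition of the Brill--Noether rank exactly as in the proof of \cref{prp:induction}, this means producing a single effective divisor $E$ with $\deg E=d-1$ such that $\rr(E+p)=0$ for every $p\in\Gamma$; note that $d\le g-2$ gives $\deg E=d-1\le g-3$, so in the cycle-reduced model of \cref{lem:cyclereduced} at least three cycles carry no point of $E$.

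To construct such an $E$ I would fix a non-hyperelliptic cycle $C^{*}$, which exists by the characterisation above, and place the $d-1$ points cycle-reduced so that $C^{*}$ carries a single point $x^{*}$ in general position while the cycles adjacent to $C^{*}$ along a chosen spine are left empty. The governing mechanism is the one isolated in \cref{ex:Coppens_modified}: a lone mass-$1$ point cannot stop Dhar's fire on a cycle once the fire surrounds it from both sides, so the only way $E+p$ can acquire positive rank is to become equivalent to a divisor carrying a symmetric (multiplicity-two) configuration on every cycle that must be crossed to reach a far empty cycle. With only the single extra point $p$ at our disposal this is impossible, and one certifies $\rr(E+p)=0$ by running Dhar's algorithm from a vertex of a far empty cycle and checking that all of $\Gamma$ burns.

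The main obstacle is twofold. First, there are finitely many exceptional positions of $p$ — $p$ landing on $C^{*}$ at one of its finitely many lattice points, or $p$ placed symmetrically to some existing point inside a hyperelliptic cycle — which, just as in \cref{ex:Coppens_modified}, must be checked by hand; here the hypothesis $m^{*}\neq2$ is precisely what guarantees that $x^{*}$ cannot be completed to an admissible symmetric configuration, so $C^{*}$ stays an impassable barrier. Second, and this is where the tree genuinely differs from a chain, one must verify that the branches hanging off the spine cannot be exploited to route mass around $C^{*}$: any attempt to push mass into a branch and back returns to the same attaching vertex and so cannot keep the fire alive, which is what ultimately reduces the tree computation to the chain computation of \cref{th:Copp1}. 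Finally, the degrees $g-1\le d\le g-3+r$ with $r\ge2$, which the rank-one reduction does not reach, I would handle by the same circle of ideas but organised through Coppens' displacement-tableau computation transplanted to trees of cycles; this combinatorial transplantation is the technically heaviest part of the argument.
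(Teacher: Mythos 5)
Your skeleton is the paper's: handle the bound and the hyperelliptic direction by \cref{prp:ineq}, reduce to $r=1$ by \cref{prp:induction}, and then exhibit a single effective divisor $E$ of degree $d-1\le g-3$ that no point $p$ completes to rank $1$, using a non-hyperelliptic cycle as a barrier for Dhar's algorithm. The execution, however, contains a genuine error: you place a chip $x^{*}$ \emph{on} the non-hyperelliptic cycle $C^{*}$. This destroys the barrier rather than reinforcing it. The reason an empty non-hyperelliptic cycle blocks a travelling $2$-chip packet is that $2v\not\sim 2w$ on that cycle when $m\neq 2$; but once the cycle carries an extra chip the packet arrives as a divisor of degree $3$ on a circle, which has rank $2$ irrespective of $m$, so it exits at the far vertex leaving a residual point behind. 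Concretely, take the genus-$5$ chain of \cref{ex:Coppens} (Figure \ref{fg:ex_g5}), $d=3$, and your divisor $E=x_{1}+x^{*}$ with $x_{1}\in C_{1}$ and $x^{*}\in C_{3}$ arbitrary (so $C_{2},C_{4},C_{5}$ are empty, as you require). Choosing $p\in C_{1}$ with $x_{1}+p\sim 2a_{1}$ gives $E+p\sim 2a_{2}+x^{*}\sim 2a_{3}+q\sim 2a_{4}+q$ for some $q\in C_{3}$, and one checks cycle by cycle that this class has rank $1$; so $\rr(E+p)\geq 1$ for this $p$, and your $E$ does not witness $w^{1}_{3}<1$. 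No genericity assumption on $x^{*}$ repairs this, and your stated mechanism (``$m^{*}\neq 2$ guarantees $x^{*}$ cannot be completed to a symmetric configuration'') misidentifies what the hypothesis is used for. The paper's construction instead keeps the non-hyperelliptic cycle $\gamma_{0}$ \emph{and} both of its neighbours empty -- a chain $\Gamma_{0}$ of at least three chip-free cycles with $\gamma_{0}$ in the middle -- which is exactly what the budget $\deg E=d-1\le g-3$ is spent on; the points of $E$ all live in distinct cycles outside $\Gamma_{0}$.

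Two further points. First, your reduction of ``branches cannot route mass around $C^{*}$'' to the chain case is asserted rather than argued; in the paper this is handled by noting that all the linear equivalences used modify the divisor only on $\Gamma_{0}$ and on the single cycle containing $p$, so the tree structure is invisible to the argument. Second, you correctly observe that \cref{prp:induction} fixes the degree, so the rank-one case ($d\le g-2$) does not reach the pairs $(d,r)$ with $r\ge 2$ and $g-1\le d\le g-3+r$; and indeed it cannot, since $w^{1}_{g-1}(\Gamma)=g-3$ for Martens-special trees of cycles by \cref{th:main_tree_cycles}. Your proposal defers these cases to an unexecuted ``displacement-tableau transplantation'', so even with the divisor construction repaired the proof would be incomplete in that range. (The paper's own treatment of this point is terse, but the burden is on the proof to close it.)
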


\begin{proof}
    Let $\Ga$ be a non-hyperelliptic tree of cycles. Then there exist a cycle $\gamma_0$ in $\Gamma$ of parallel edges of different lengths. 

    To show that $w^{r}_{d}(\Ga)< d-2r$ it is sufficient to provide an effective divisor $E$ of degree $d-r\leq g-3$ and show that there are no $p_1,\dots,p_r$ such that $E+\sum_{i=1}^r p_i$ has rank $r.$
    By Proposition \ref{prp:induction} it suffices to prove it for $r=1.$
    Since the degree of $E$ must be less than $g-3$, we can define its support as the sum of non-vertex points, each contained in a distinct cycle, such that the chain of (at least) 3 cycles $\Ga_0$, with $\gamma_0$ a cycle in the middle, does not contain any of such points. The case where $\Ga_0$ is a chain of exactly three cycles is represented in Figure \ref{fg:tree_cycles_prop}. 
    If $p$ is contained in $\Ga_0,$ $E+p$ cannot have rank $1.$ This can be easily seen by starting Dhar's algorithm at any other point of the same cycle containing $p.$
    If instead $p$ is contained in the same cycle with another point in the support of $E$, we can assume without loss of generality that it at the same distance from one or both vertices.
    If $\Gamma\setminus\Gamma_0$ has hyperelliptic components we have $E+p\sim D$ with $D|_{\Gamma_0}=v+p'$ where $v$ is a vertex adjacent to a bridge in $\Ga_0\setminus \gamma_0$ and $p'$ a point in the interior of the longest edge in $\gamma_0,$ as in Figure \ref{fg:tree_cycles_prop}.

    \begin{figure}[h!]
\centering
\begin{tikzcd}
\begin{tikzpicture}[scale=0.8]
    \draw (-0.6,0)circle (0.6);
    \draw (0,0)--(0.5,0);
    \draw[blue] (0.5,0) to [out=300, in=240] (1.5,0);
    \draw[blue] (0.5,0) to [out=60, in=120] (1.5,0);
    \draw[blue] (1.5,0)--(5,0);
     \draw (6,0)--(7,0);
      \draw (9-1,0)--(10-1,0);
      \draw[blue] (5.5,0)circle (0.5);
    \draw[blue] (2,0) to [out=280, in=260] (4,0); 
     \draw (8-0.5,0)circle (0.5);
    \draw(9+1,0)--(9.5+1,0);
    \draw (9.8+1,0)circle (0.3);
   \draw(5.5,-2) node [above] {$\sim$};
   \draw (8+2.5-1,0)circle (0.5);
   \divisor{8-0.5,0.5}
   \divisor{-1.2,0}
   \divisor{12.1-1,0}
    \draw(-1.2,0) node [left] {$2$};
   \divisor{10.5-1,0.5}
    \end{tikzpicture}\end{tikzcd}
    \begin{tikzcd}
\begin{tikzpicture}[scale=0.8]
    \draw (-0.6,0)circle (0.6);
    \draw (0,0)--(0.5,0);
    \draw[blue] (0.5,0) to [out=300, in=240] (1.5,0);
    \draw[blue] (0.5,0) to [out=60, in=120] (1.5,0);
    \draw[blue] (1.5,0)--(5,0);
     \draw (6,0)--(7,0);
      \draw (9-1,0)--(10-1,0);
      \draw[blue] (5.5,0)circle (0.5);
    \draw[blue] (2,0) to [out=280, in=260] (4,0); 
\draw(4.8,0) node [above] {$v$};
\draw (8+2.5-1,0)circle (0.5);
\draw (8-0.5,0)circle (0.5); 
    \draw(9+2-1,0)--(9.5+2-1,0);
    \draw (9.8+2-1,0)circle (0.3);
   \draw(3.75,-0.39) node [below] {$p'$};
   \divisor{3.7,-0.37}
   \divisor{8-0.5,0.5}\divisor{5,0}\divisor{10.5-1,0.5}\divisor{12.1-1,0}
    \end{tikzpicture}\end{tikzcd}
    \caption{Two linearly equivalent divisors on a non-hyperelliptic tree of cycles $\Gamma$, with a subgraph $\Ga_0\subset \Ga$ in blue.}\label{fg:tree_cycles_prop}
\end{figure}
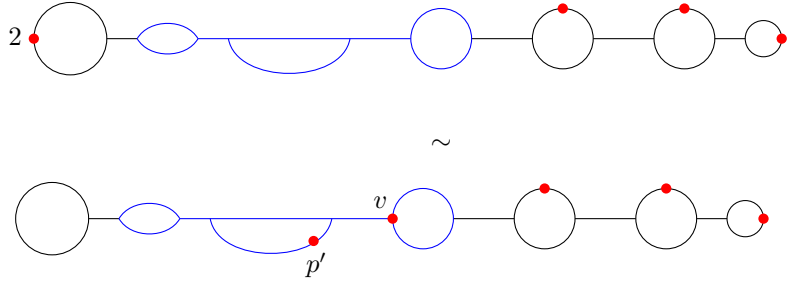
    
    Starting Dhar's burning algorithm from any other point in the cycle containing $v$ burns the whole graph, hence $E+p$ cannot have rank $r\geq 1$.

    Let us observe that if $\Gamma\setminus\Gamma_0$ does not have  hyperelliptic components, it is even more easier to obtain a divisor linearly equivalent divisor to $E+p$ whose support has a unique point which is a vertex, and all other points are non-vertex points in distinct cycles and the above argument applies. 
    
    Moreover we are considering a linearly equivalent divisor that differs from $E+p$ only on $\Gamma_0$ and the cycle containing $p.$ Therefore the same holds if the tree of cycle is not a chain.
\end{proof}

We will later see in the last section that the above result generalizes to any non-hyperelliptic graph.
We now generalize Theorem \ref{th:Copp2} to a subset of trees of cycles, defined as follows. 

\begin{defin}\label{def:MStree}
    Let $\Ga$ be a tree of cycles whose vertices corresponding to vertices of valence at least 3 in its block tree $T_{\Gamma}$ are denoted by $v_1,\dots, v_t.$ 
    A tree of cycles $\Gamma$ is called \emph{Martens-special} of rank $r$ and type $k$ if it is non-hyperelliptic and any subgraph $\Ga_i$ corresponding to a maximal path in $T_{\Gamma}$ satisfies one of the following.
    \begin{itemize}
    \item[(P1)] It is hyperelliptic.
    \item[(P2)] It is Martens-special of rank $r_i$ and type $k_i$ or 
    \item[(P3)] It is a chain of cycles that becomes Martens-special of rank $r'$ and type $k_i$ by adding at some $w\in\{v_1,\dots,v_t\}\cap \Ga_i$ a hyperelliptic chain of cycles of genus $r'$. 
    \end{itemize}
    Here,
    $r=\operatorname{min}\{r_i|\, \Ga_i \text{ Martens-special of rank $r_i$ and type $k_i$}\}$ and $
    r=1$ if the set is empty (i.e. all $\Ga_i$ satisfy either (P1) or (P3)) and
    $k$ is the number of non-hyperelliptic cycles.
\end{defin}
\begin{rk}
    In the above definition, when we consider the subgraph $\Ga_i$ which is neither hyperelliptic nor Martens-special, it is sufficient to add hyperelliptic cycles only in correspondence of vertices $w$ in the first or last bridge of the chain or on bridges between non-hyperelliptic cycles to make it Martens-special. 
    This is because, from Definition \ref{df:M_special}, a chain of cycles is Martens-special of rank $r$ if the first $r+1$ and the last $r+1$ cycles are hyperelliptic and between any two non-hyperelliptic cycles there at least $r$ consecutive hyperelliptic ones, or in other words there is a hyperelliptic chain of genus at least $r$.
\end{rk}

In the above definition, if the block tree $T_{\Ga}$ is a path we recover the definition of a Martens-special chain of cycles of rank $r$ and type $k.$ 

\begin{ex}
In Figure \ref{fg:ex_semispecial} we show an example of a Martens-special tree of cycles, where none of the chains corresponding to a maximal path in the block tree is Martens-special, but they all satisfy (P3).
For instance, consider the horizontal chain of cycles. It becomes Martens-special only after adding a hyperelliptic cycle at $v$, as shown in Figure \ref{fg:max_chain}. The same holds if we consider the other subgraphs corresponding to the other two maximal paths in the block tree.

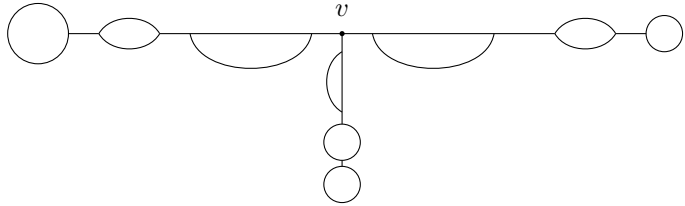
\begin{figure}[h!]
\centering
\begin{tikzcd}
\begin{tikzpicture}[scale=0.8]
    \draw (-0.5,0)circle (0.5);
    \draw (0,0)--(0.5,0);
    \draw (0.5,0) to [out=300, in=240] (1.5,0);
    \draw (0.5,0) to [out=60, in=120] (1.5,0);
    \draw (1.5,0)--(8,0);
    \draw (2,0) to [out=280, in=260] (4,0);
 \draw (5,0) to [out=280, in=260] (7,0);
    \draw (7+1,0) to [out=300, in=240] (8+1,0);
    \draw (7+1,0) to [out=60, in=120] (8+1,0);
    \draw(8+1,0)--(9.5,0);\draw (9.8,0)circle (0.3);
    \vertex{4.5,0}
 \draw (4.5,0)--(4.5,-1.5);
     \draw (4.5,-0.3) to [out=300-90, in=240-90] (4.5,-1.3);
     \draw (4.5,-1.8)circle (0.3);
     \draw (4.5,-2.1)--(4.5,-2.2);
    \draw (4.5,-2.5)circle (0.3);
    \draw (4.5,0.3) node {$v$};
    \end{tikzpicture}\end{tikzcd}\caption{A Martens-special tree of cycles of rank $1$, type $3$ and genus $9$.}\label{fg:ex_semispecial}
\end{figure}

\begin{figure}[h!]
\centering
\begin{tikzcd}
\begin{tikzpicture}[scale=0.8]
   \draw (-0.5,0)circle (0.5);
    \draw (0,0)--(0.5,0);
    \draw (0.5,0) to [out=300, in=240] (1.5,0);
    \draw (0.5,0) to [out=60, in=120] (1.5,0);
    \draw (1.5,0)--(8,0);
    \draw (2,0) to [out=280, in=260] (4,0);
 \draw (5,0) to [out=280, in=260] (7,0);
    \draw (7+1,0) to [out=300, in=240] (8+1,0);
    \draw (7+1,0) to [out=60, in=120] (8+1,0);
    \draw(8+1,0)--(9.5,0);\draw (9.8,0)circle (0.3);
    \vertex{4.5,0}
      \draw (4.5,0.3) node {$v$};
\end{tikzpicture}\\
\begin{tikzpicture}[scale=0.8]
    \draw (-0.6+1,0)circle (0.6);
    \draw (1,0)--(0.5+1,0);
    \draw (1.5,0) to [out=300, in=240] (2.5,0);
    \draw (1.5,0) to [out=60, in=120] (2.5,0);
    \draw (2.5,0)--(5.5,0);
     \draw (6.5,0)--(8+2,0);
      \draw (6,0)circle (0.5);
    \draw (3,0) to [out=280, in=260] (5,0); 
        \draw (5+2,0) to [out=280, in=260] (7+2,0);
    \draw (8+2,0) to [out=300, in=240] (9+2,0);
    \draw (8+2,0) to [out=60, in=120] (9+2,0);
    \draw(9+2,0)--(9.5+2,0);
    \draw (9.8+2,0)circle (0.3);
\end{tikzpicture}\end{tikzcd}\caption{A chain of cycles which becomes Martens-special of rank $1$ and type $2$ after replacing the vertex $v$ with a hyperelliptic cycle.}
    \label{fg:max_chain}
\end{figure}
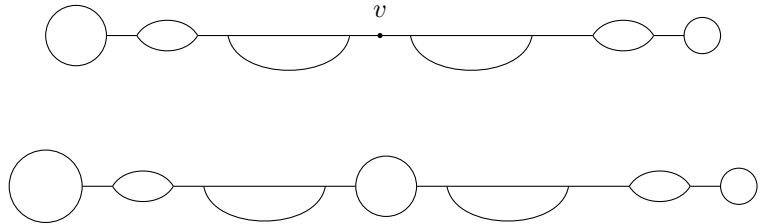
\end{ex}

In the rest of the section we prove that Martens-special trees of cycles behave like Martens-special chains of cycles, namely they realize the equality in Proposition \ref{prp:ineq}.

\begin{prop}\label{prp:main_tree_cycles}
    Let $\Gamma$ be a Martens--special tree of cycles of genus $g$ and rank $r\leq g-2$.
    Then $w_{g-2+r}^r(\Gamma)=g-2-r.$ 
\end{prop}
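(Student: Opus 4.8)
The plan is to prove the equality $w_{g-2+r}^r(\Gamma)=g-2-r$ by showing that for \emph{every} effective divisor $E$ of degree $(g-2+r)-r-(g-2-r)=r$\ldots wait, let me recompute: by definition of the Brill--Noether rank, $w_d^r(\Gamma)\geq \rho$ means that for every effective $E$ of degree $r+\rho$ there is some $D\in W_d^r(\Gamma)$ with $\Supp(E)\subseteq\Supp(D)$. With $d=g-2+r$ and $\rho=g-2-r$, this means: for every effective $E$ of degree $g-2$, we must produce a divisor $D$ of degree $g-2+r$ and rank $r$ whose support contains that of $E$. So first I would reduce to the case where $E$ is cycle-reduced, using the extension of \cref{lem:cyclereduced} to trees of cycles noted in the remark. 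Since $\Gamma$ has genus $g$ and $E$ has degree $g-2$, a cycle-reduced representative places at most one point in each of the $g$ cycles, leaving exactly two cycles (or vertices) empty.

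The combinatorial heart is then to show that we can add $r$ further points $p_1,\dots,p_r$ so that $E+\sum p_i$ has rank $r$. Following the structure of \cref{ex:Coppens} and the subsequent tree examples, I would exploit the non-hyperelliptic cycles: the key mechanism there is that, by adding one point opposite to an existing support point in a suitable cycle, one can push the divisor to a linearly equivalent one supported with multiplicity $2$ at a vertex, and then propagate that doubled mass across the chain to obtain a divisor that restricts to degree $2$ on every cycle. For the tree case, I would argue that because $\Gamma$ is Martens-special, every maximal path in the block tree $T_\Gamma$ is either hyperelliptic, Martens-special, or becomes Martens-special after inserting a hyperelliptic chain at a separating vertex (properties (P1)--(P3)); in each case the local structure supplies enough hyperelliptic cycles between the non-hyperelliptic ones to let the doubling propagate without obstruction. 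The rank-$r$ claim should then follow from the fact that a divisor of degree $g-2+r$ on a tree of cycles that restricts to degree $2$ on each cycle (with the extra $r$ mass absorbed at the separating vertices $v_1,\dots,v_t$) can have any prescribed effective divisor of degree $r$ subtracted and remain effective, which one checks via Dhar's burning algorithm and Riemann--Roch.

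Concretely, I would organize the proof around the separating vertices. For a tree of cycles, a cycle-reduced $E$ of degree $g-2$ distributes its mass among the maximal chains $\Ga_1,\dots,\Ga_s$ meeting at the $v_j$; I would add the $r$ new points so as to route the doubled mass toward the separating vertices, using the hyperellipticity guaranteed by (P1)/(P3) on each chain to slide a point to the appropriate vertex $v_j$ and create multiplicity $2$ there, and using the Martens-special structure (P2) to handle the non-hyperelliptic cycles exactly as in Coppens' chain argument. The upper bound $w_{g-2+r}^r(\Gamma)\leq g-2-r$ is already guaranteed by \cref{prp:ineq} (since $2r\leq g-2+r<g$ in the relevant range), so the entire content is the lower bound, i.e.\ exhibiting the completing points for an arbitrary $E$.

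The main obstacle I anticipate is the bookkeeping at the separating vertices of valence $\geq 3$: when several chains meet at a vertex $v_j$, one must ensure that the extra points can be placed so that the resulting divisor simultaneously achieves the correct local multiplicity on \emph{each} incident chain while keeping the global degree at $g-2+r$ and the rank exactly $r$ rather than accidentally larger or smaller. In particular I expect the delicate point to be verifying, via Dhar's algorithm run from an arbitrary point, that after subtracting an arbitrary effective divisor of degree $r$ the divisor remains equivalent to an effective one across the branching structure; this requires checking that the fire started anywhere is always stopped by the doubled mass sitting at the $v_j$ and by the single points on the hyperelliptic cycles, which is straightforward on a chain but needs care where branches split. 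I would handle this by induction on the number of separating vertices of valence $\geq 3$, contracting one leaf-chain at a time and reducing to the already-established chain-of-cycles case of \cref{th:Copp2}.
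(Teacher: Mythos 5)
Your overall architecture matches the paper's: the upper bound is already Proposition \ref{prp:ineq}, the content is the lower bound, one reduces to a cycle-reduced $E$ of degree $g-2$, and the general tree is handled by induction on the branch vertices with Theorem \ref{th:Copp2} as the engine for the chain pieces. But the step you yourself flag as ``the combinatorial heart'' is exactly where the proposal stops, and the missing idea is specific. The paper does not try to ``route doubled mass toward the separating vertices'' and verify rank by hand with Dhar's algorithm. Instead, for a maximal chain $\Gamma_1$ chosen so that $\deg(E|_{\Gamma_1})=g(\Gamma_1)-2$, it builds an \emph{auxiliary} chain $\Gamma_1'$ by replacing the branch vertex $v$ with a hyperelliptic chain $H$ of genus $r$ (this is precisely what property (P3) is for), places $r$ auxiliary points $q_1,\dots,q_r$ in distinct cycles of $H$, and applies Theorem \ref{th:Copp2} to $E_1+\sum q_i$ on $\Gamma_1'$ to obtain completing points $q_i'$. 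The real work is then transferring the conclusion back to $\Gamma$: a case analysis on whether each $q_i'$ lies in $H$ or not, replacement of pairs $q_i+q_i'$ by $2w$ at the boundary of $H$ using its hyperellipticity, the fact that deleting $s$ points drops the rank by at most $s$, and Riemann--Roch on $\overline{\Gamma\setminus\Gamma_1}$ to recover rank $r$ on the complementary branch. None of this mechanism appears in your sketch, and without it the assertion that the doubling ``propagates without obstruction'' across the branch vertex is exactly the unproved claim.

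Two further concrete problems. First, your statement that one obtains ``a divisor that restricts to degree $2$ on every cycle'' cannot be literally correct: a tree of cycles of genus $g$ has $g$ cycles meeting only along bridges and separating vertices, so such a divisor would have degree close to $2g$, which exceeds $g-2+r$. What Example \ref{ex:Coppens} actually exhibits is that for \emph{each} cycle there is a \emph{different} linearly equivalent representative with degree $2$ on that cycle; any rank verification must be phrased in those terms. Second, ``contracting one leaf-chain at a time and reducing to the chain case'' is not a well-defined reduction: the restriction of a cycle-reduced degree-$(g-2)$ divisor to a maximal chain $\Gamma_1$ of genus $g_1$ can have any degree between $0$ and $g_1$, not necessarily $g_1-2$, so Theorem \ref{th:Copp2} does not apply to the pieces directly. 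You would need to explain how the degree deficit or excess on each branch is absorbed; the paper does this by selecting $\Gamma_1$ with the correct degree and packaging the remainder into an effective divisor $R$ supported on $\Gamma\setminus\Gamma_1$ whose contribution is controlled by Riemann--Roch. As written, the proposal is a plausible plan whose decisive step is asserted rather than proved.
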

\begin{proof}
    Let us assume that $\Gamma$ has a block-tree $T_{\Gamma}$ with precisely a vertex $v$ of valence $3$: the complement of a chain of cycles corresponding to a maximal path is again a chain of cycles.
    The proof for the general case can be obtained inductively by gluing chains of cycles to $\Gamma$.

    Let $E$ be any cycle-reduced divisor of degree $g-2$. We need to show that there exist $p_1,\dots, p_r$ such that $E+\sum_{i=1}^r p_r$ has rank $r$.
    
    We consider a maximal chain $\Gamma_1$ such that $\operatorname{deg}(E_1)=g(\Ga_1)-2,$ where $E_1:=E|_{\Gamma_1}.$ 
    Let us assume that $\Ga_1$ satisfies property (P3) in Definition \ref{def:MStree}, then we replace the vertex $v$ with a hyperelliptic chain $H$ of genus $r$ and obtain a Martens-special chain $\Ga_1'$ of rank $r$. This is clear in the case $r=1.$ If $r>1$ this is also true because by definition there is at least a Martens-special chain of cycles contained in $\Ga$ of rank (at least) $r$ and the two subgraphs in $\Ga_1$ obtained by removing $v$ must be contained in Martens-special chains of rank higher or equal than $r$. 
    
    Then by Theorem \ref{th:Copp2} if we consider the effective divisor $E_1+q_1+\dots+q_r$ on $\Ga_1'$, with $q_i$ in the interior of distinct edges, each in a distinct cycle of $H$, then there exist $q'_i$ such that $\operatorname{r}(E_1+\sum_{i=1}^r(q_i+q_i'))\geq r$ on $\Ga_1'.$ 
    
    If $q_i'\not \in H,$ for any $i$, then we take $p_i=q_i'$ and $\operatorname{r}(E_1+\sum_{i=1}^r p_i)\geq r$ on $\Ga_1.$ Therefore $E_1+\sum_{i=1}^r p_i\sim r\cdot v+E_1'$ with $\operatorname{Supp}E_1'\subset \Ga_1$. Thus $E+\sum_{i=1}^r p_i\sim r\cdot v+ E|_{\Ga\setminus \Ga_1}+E_1'$ which restricted to $\overline{\Ga\setminus \Ga_1}$ has also rank (at least) $r$ by Riemann--Roch.

    If $q_i'\in H,$ for some $i\in\{1,\dots,r\}$, then for each of such $i$ we have two possibilities: either $E_1+q_i+q_i'\sim E_1+2w,$ with $w$ one of the two points in the boundary $\partial H$ in $\Ga_1'$, or $E_1+q_i+q_i'\sim E_1+w+w',$ with $w'\in H,$ not a vertex (up to relabeling the points). In the latter case, we just repeat the above argument and then $w\notin H$ behaves as $q_i'\not \in H$. 

    Otherwise we may assume $E_1+\sum_{i=1}^r (q_i+q_i')\sim E_1+(2s)w+ \sum_{j=s+1}^r (q_j+q_j'),$ for the first $s$ pairs of points, with $s;\,1\leq s\leq r.$
    Then we have 
    \begin{equation}\label{eq:prp1}
    \rr(E_1+\sum_{i=1}^r (q_i+q_i'))=\rr(E_1+(2s)w+ \sum_{j=s+1}^r (q_j+q_j'))\geq r\qquad\text{ on }\Ga'_1.
    \end{equation}

Notice that since $H$ is hyperelliptic and $q_j'\notin H$ for $j=s+1,\dots,r,$ then  
    \begin{equation}\label{eq:prp2}
    \rr(E_1+(2s)v+ \sum_{j=s+1}^r q_j')\geq r\qquad\text{ on }\Ga_1.
    \end{equation}

    Moreover, removing $s$ points cannot decrease the rank more than $s$, then from \eqref{eq:prp1} we have
    \begin{equation}\label{eq:prp3}
    \rr(E_1+\sum_{i=1}^r q_i+\sum_{j=s+1}^r q_j')\geq r-s\qquad\text{ on }\Ga'_1.
    \end{equation}

    Again, if we restict to $\Gamma_1,$ then 
    \begin{equation}\label{eq:prp4}
    \rr(E_1+\sum_{j=s+1}^r q_j')\geq r-s\qquad\text{ on }\Ga_1.
    \end{equation}
    
    Then we take $p_j=q_j'$ for $j=s+1,\dots,r$ and for $i=1,\dots, s,$ let $p_i$ be a point in the $i$-th closest cycle in ${\Ga\setminus \Ga_1}$ to $v$ at the same distance from $v$ as the point already in the support of $E$. 
    Notice that we have already observed that when $r>1$ the subgraph in ${\Ga\setminus \Ga_1}$ incident $v$ has to contain (at least) $r\geq s$ consecutive hyperelliptic cycles.
    Then 
    \begin{equation}\label{eq:prp6}
    E+\sum_{i=1}^r p_i \sim E_1+ (2s)v +\sum_{j=s+1}^r p_j+R,
    \end{equation}
    for some effective divisor $R$ with $\operatorname{Supp}(R)\subset \Ga\setminus \Ga_1$ and degree $g(\Ga\setminus \Ga_1)-s$. 
    Such a divisor has rank (at least) $r$ on $\Ga_1$ from \eqref{eq:prp2}. 
    Moreover from \eqref{eq:prp4}, we can write $E_1+\sum_{j=s+1}^r p_j\sim (r-s)v+R',$ with $\operatorname{Supp}(R')\subset\Ga_1.$ Then \eqref{eq:prp6} rewrites as
    \begin{equation}\label{eq:Ga_12}
    E+\sum_{i=1}^r p_i \sim E_1+ (2s)v +\sum_{j=s+1}^r p_j+R\sim (r+s)v+R'+R.
    \end{equation}
    Its restriction on $\overline{\Ga\setminus\Ga_1}$ is $(r+s)v+R$, which has rank (at least) $r$ by Riemann-Roch.

    If $\Ga_1$ is hyperelliptic or Martens-special of rank $r$ we just consider $\Ga_1'=\Ga_1$ without adding the points $q_i$ and the claim follows more easily.
    Repeating the argument for any $E$ concludes the proof. 
\end{proof}

\begin{theo}\label{th:main_tree_cycles}
    Let $\Gamma$ be a tree of cycles of genus $g$.
    Then $w_{g-2+r}^r(\Gamma)=g-2-r$ for some integer $r\in\{1,\dots, g-2\}$ if and only if $\Gamma$ is hyperelliptic or it is a Martens-special tree of cycles of rank at most $r$.
\end{theo}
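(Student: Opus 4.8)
The plan is to prove a biconditional characterizing when a tree of cycles $\Gamma$ achieves the extremal value $w_{g-2+r}^r(\Gamma)=g-2-r$, by combining the structural results already established in the excerpt. The backward direction is essentially done: if $\Gamma$ is hyperelliptic, then \cref{prp:ineq} gives $w_{g-2+r}^r(\Gamma)=(g-2+r)-2r=g-2-r$ directly; and if $\Gamma$ is a Martens-special tree of cycles of rank at most $r$, then \cref{prp:main_tree_cycles} yields exactly the same equality. So the substance of the proof lies entirely in the forward direction: assuming $w_{g-2+r}^r(\Gamma)=g-2-r$ and that $\Gamma$ is \emph{not} hyperelliptic, I must force $\Gamma$ to be a Martens-special tree of cycles of rank at most $r$.

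First I would reduce to $r=1$ using \cref{prp:induction}. That proposition shows for a non-hyperelliptic graph that $w_d^1(\Gamma)<d-2$ propagates to $w_d^r(\Gamma)<d-2r$ for all admissible $r$; taking its contrapositive, the hypothesis $w_{g-2+r}^r(\Gamma)=g-2-r$ (i.e. equality, not strict inequality) should let me extract that the corresponding rank-$1$ equality holds on the relevant maximal chains. The next and main step is to pass to the block-tree decomposition: each subgraph $\Gamma_i$ corresponding to a maximal path in $T_\Gamma$ is, up to bridge contraction, a chain of cycles, so I can invoke Coppens' classification (\cref{th:Copp2}) locally. Concretely, I would argue that if some maximal chain $\Gamma_i$ fails all three of the conditions (P1), (P2), (P3) in \cref{def:MStree}, then one can build an effective divisor $E$ of degree $g-2$ (a cycle-reduced one, using the tree-of-cycles analog of \cref{lem:cyclereduced}) that cannot be completed to a rank-$1$ divisor by adding a single point, which via Dhar's burning algorithm contradicts the equality $w_{g-1}^1(\Gamma)=g-3$.

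The contradiction-building step is the main obstacle. The difficulty is that $\Gamma$ is global, whereas Coppens' theorem is stated for chains; I would need a gluing/localization argument showing that the obstruction on a single maximal chain $\Gamma_i$ (a non-hyperelliptic chain that is \emph{not} Martens-special even after adjoining a hyperelliptic tail of genus $r'$ at the relevant separating vertex) survives when $\Gamma_i$ is embedded in the larger tree. The key technical point is to choose $E$ supported on distinct cycles, with the degree distributed so that the deficient chain $\Gamma_i$ carries degree $g(\Gamma_i)-2$ locally; then \cref{th:Copp2} applied to $\Gamma_i$ (or to $\Gamma_i$ with the hyperelliptic tail that (P3) would require) tells me no completing point exists on $\Gamma_i$, and a Dhar's-algorithm argument like those in \cref{ex:Coppens_modified} and the proof of \cref{prp:main_tree_cycles} shows that adding a point elsewhere in $\Gamma$ still leaves the whole graph burning. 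Conversely, if every maximal chain satisfies one of (P1)--(P3), then $\Gamma$ \emph{is} a Martens-special tree of cycles by definition, and its rank is at most $r$ precisely because the local Martens-special ranks $r_i$ are controlled by the hypothesis. Assembling these local statements into the global biconditional, and carefully tracking that the extremal rank value forces the minimal rank $r_i$ in \cref{def:MStree} to be at most $r$, completes the argument.
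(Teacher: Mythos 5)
Your backward direction is fine and matches the paper: \cref{prp:ineq} handles the hyperelliptic case and \cref{prp:main_tree_cycles} the Martens-special case. The forward direction, however, has two genuine gaps. First, the reduction to $r=1$ via \cref{prp:induction} does not go through: that proposition keeps the degree $d$ fixed and requires $0<2r\le d<g$, whereas here $d=g-2+r$ varies with $r$ and equals or exceeds $g$ as soon as $r\ge 2$; even ignoring the range issue, its contrapositive would give you the rank-one equality at degree $g-2+r$, not the equality $w^1_{g-1}(\Gamma)=g-3$ at degree $g-1$ that your contradiction argument uses. The paper does not reduce to $r=1$ in this proof; it treats general $r$ directly, with separate sub-arguments for $r=1$ and $r\ge 2$.

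Second, the localization step you describe is exactly where the work lies, and your sketch leaves it unproved --- and in a form harder than necessary. \cref{th:Copp2} only asserts the \emph{existence} of a non-completable divisor on a chain that is neither hyperelliptic nor Martens-special; it does not say that your chosen $E|_{\Gamma_i}$ of degree $g(\Gamma_i)-2$ is non-completable, and that bad divisor lives on the auxiliary chain $\Gamma_i'$ (with the hyperelliptic tail $H$ glued at the separating vertex), possibly with support inside $H$, which has no counterpart in $\Gamma$. Moreover a completing point $p$ placed in $\Gamma\setminus\Gamma_i$ can feed rank into $\Gamma_i$ through the separating vertex $v$ (contributing $v$ or $2v$ up to linear equivalence), so a purely local obstruction on $\Gamma_i$ does not immediately survive in $\Gamma$. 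The paper's proof runs the implication in the opposite, direct, order: assuming $w^r_{g-2+r}(\Gamma)=g-2-r$, it shows that \emph{every} cycle-reduced divisor of degree $g(\Gamma_1')-2$ on $\Gamma_1'$ can be completed to rank $r$ --- splitting off the part of the divisor supported in $H$, using the hyperelliptic involution on $H$ and Riemann--Roch on the two components of $\Gamma_1'\setminus H$ --- so that $w^r_{g(\Gamma_1')-2+r}(\Gamma_1')=g(\Gamma_1')-2-r$, and only then invokes \cref{th:Copp2} to classify $\Gamma_1'$ and hence $\Gamma_1$ as (P1), (P2) or (P3) of \cref{def:MStree}. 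Without either carrying out that transfer of the equality to $\Gamma_1'$, or giving the reverse transport of a bad divisor from $\Gamma_1'$ to $\Gamma$ while controlling the contributions through $v$, your argument is a plan rather than a proof.
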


\begin{proof}
    From Propositions \ref{prp:ineq}, \ref{prp:main_tree_cycles}, it is enough to show that if $\Ga$ is non-hyperelliptic such that $w_{g-2+r}^r(\Gamma)=g-2-r$, then it is Martens-special of rank at most $r$.
    As in the previous proof, we assume that $\Gamma$ has a block-tree $T_{\Gamma}$ with precisely a vertex $v$ of valence $3$.

    Let $\Gamma_1$ be one of the chains corresponding to a maximal path in $T_{\Ga}$ and
    consider $E_1$ an effective divisor on $\Ga_1$ of degree $g_1-2$, $g_1=g(\Ga_1)$. Then adding any $g-g_1$ points yields a divisor of degree $g-2$ over $\Ga.$ We choose the points $\{q_j\}_{j=1,\dots, g-g_1}$ in $\Gamma\setminus \Ga_1,$ each in a distinct cycle, in the interior of the edges. 
    Then, from the equality $w_{g-2+r}^r(\Ga)=g-2-r$, there exist $p_1,\dots,p_r$ such that 
    \begin{equation}\label{eq:hp}
        \rr(E_1+\sum_{j=1}^{g-g_1} q_j+\sum_{i=1}^r p_i)\geq r.
    \end{equation}
    Notice that among the points $p_1,\dots,p_r$, some might be contained in $\Ga\setminus \Ga_1.$ For instance, if $p_1\in \Ga\setminus \Ga_1$, there exists $\hat j\in\{1,\dots,g-g_{1}\}$ such that $E_1+\sum_{j=1}^{g-g_1} q_j+p_1\sim E_1+\sum_{j\neq \hat j} q_j+2v$ (otherwise we would have $E_1+\sum_{j=1}^{g-g_1} q_j+p_1\sim E_1+\sum_{j=1}^{g-g_1} q'_j+v$ with the points $q'_j$ with the same properties as $q_j$ so in that case we can assume $p_1=v$ which thus belong to $\Ga_1$).
    Therefore on $\Gamma_1$
    \begin{equation}\label{eq:thm1}
        \rr(E_1+p_1)\geq 1 \quad\text{or}\quad
        \rr(E_1+2v)\geq 1.
    \end{equation}
    
    Let us consider now a chain of cycles $\Ga_1',$ obtained by replacing $v$ with a hyperelliptic chain of cycles $H$ of genus $r$. Notice also that adding $H$ to a hyperelliptic or Martens-special chain of cycles yields again a hyperelliptic or Martens-special chain (of higher or equal rank), respectively. 
    
    We will prove that $w_{g(\Ga_1')-2+r}^r(\Ga_1')=g(\Ga_1')-2-r$ (by construction $g(\Ga_1')=g_1+r\geq r+2$).
    By Theorem \ref{th:Copp2}, the equality would imply that $\Ga_1'$ is hyperelliptic or Martens-special of rank $r$, which means that the chain $\Ga_1$ is hyperelliptic, Martens-special of rank at most $r,$ or a chain of cycles satisfying (P3) in Definition \ref{def:MStree}.
    For any $E_1'$, a cycle-reduced divisor of degree $g_1+r-2$ on $\Ga_1',$ we want to show that there exist $r$ points that, added to $E_1'$, define a divisor of rank at least $r$.
    
    Denote by $w_1,w_2$ the two points in $\partial H$, whose image under the contraction of $H$ is $v.$ 
    Since $E_1'$ in cycle-reduced, we can always write $E_1'=E_1+x_1+\dots +x_r,$ with $\operatorname{Supp}(E_1)\subset \Ga_1$, with $\operatorname{deg}(E_1)=g_1-2$. Then, depending on the position of the points $x_j$ with respect to $H,$ we have two possibilities.

    \begin{enumerate}
        \item If $x_j\in H$ for all $j=1,\dots,r$, then 
    let us treat first the case $r=1$ separately.
    
    From \eqref{eq:thm1}, we have either $\rr((E_1+p)|_{\Ga_1})\geq 1$ for some $p\in\Ga_1$ or $\rr((E_1+2v)|_{\Ga_1})\geq 1$.
    In the first case $E_1+p\sim w_i+R,$ for some effective divisor $R,$ and $\rr(w_i+x_1)=1$ on $H$. 
    In the second one instead we choose $p=\iota(x_1)$, where $\iota$ is the hyperelliptic involution in $H.$ Then  $p+x_1\sim 2w_i,$ which has rank $1$ in $H$ and yields that $E_1+2v$ has also rank $1$, when restricted to $\Ga_1.$
    Therefore in both cases we obtain a divisor of rank at least $1$ on $\Ga_1'$ by adding a point.
    
    For $r\geq 2,$ we can choose $p_j=\iota (x_j),$ for any $j=1,\dots,r.$
    Then, $E_1+\sum_{j=1}^r(x_j+p_j)\sim E_1+(2r)w_i$ which clearly has rank $r$ on $H$.
    By applying the Riemann--Roch theorem on $\Ga_1$ we have  $\operatorname{r}(E_1+(2r)w_i)\geq 2r-2\geq r $.

    \item If there exist some $j$ such that $x_j\notin H$, 
    let us consider again the case $r=1$ first.
    
    In particular $\overline{\Ga_1'\setminus H}$ is defined by two chains of cycles $\Ga^a$ and $\Ga^b,$ such that the degree of $E_1+x_1$ restricted to one of the two components, is equal to the genus.
    Say that such a component is $\Ga^a$ and $w_1\in\Ga^a$ and $w_2\in \Ga^b.$ 
    Pick $p_1\in \Ga^a$ at the same distance from $H$ and on the opposite edge (if not on a bridge) as the closest point (w.r.t. $H$), $z_1$, in the support of $(E_1+x_1)|_{\Ga^a}.$ This gives $(E_1+x_1+p_1)\sim 2w_1+(E_1+x_1-z_1)\sim 2w_2+(E_1+x_1-z_1)$, which has rank $1$ on $H.$ Riemann--Roch applied on both components $\Ga^a,\Ga^b$ implies that the rank has to be at least 1.
    
    When $r\geq2,$ since the total degree of $E_1'$ is $g_1+r-2$ and $E'_1$ is cycle reduced, there are at most two points among the $x_j$ not in $H$. Say $x_1,x_2\notin H$ (the case with one point is analogous) and we may now assume that the degree of $E_1+\sum_{j=1}^r x_j$, restricted to each of the two components, is equal to their genera. Define $p_1, z_1\in \Ga^a$ as above, and $p_2,z_2$ similarly on $\Ga^b.$ This gives $E_1+x_1+x_2+p_1+p_2\sim E_1-z_1-z_2+x_1+x_2+4w_i.$
    Define instead $p_j=\iota(x_j)$ for $j\in\{ 3,\dots, r\}.$ Then $E_1+\sum_{j=1}^r (x_j+p_j)\sim (2r)w_i+R$ for some effective divisor $R$ of degree $g_1-2,$ with support in $\Ga_1.$
    Again, by Riemann--Roch, $\operatorname{r}((2r)w_i+R)|_{\Ga_1})\geq 2r-2\geq r$ when $r\geq2.$ 
    \end{enumerate}
\end{proof}

\subsection{Further counterexamples}\label{ssc:further}

Unfortunately, Martens-special trees of cycles (and thus chains of cycles) are not the only counterexample when $d=g-2+r.$
Here we present another class of non-hyperelliptic graphs for which Conjecture \ref{JLconj} does not hold.
Namely, $w_{g-2+r}^r(\Ga)=g-2-r$ when $r=1$ and $\Ga$ is described as follows.
We will consider a metric graph $\Ga$ of genus $g\geq 7$ with two parallel edges of different lengths with vertices $v_1,v_2$ and attached to $v_i$ is glued a bridge with endpoint $w_i$, $i=1,2$; moreover on each $w_i$ is glued a hyperelliptic graph $\Ga_i$ of genus $g_i\geq 3$, with $2w_i\in W_2^1(\Ga_i)$, as in Figure \ref{fg:controesnuovo}. 

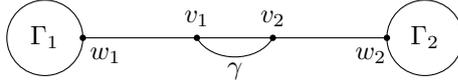
\begin{figure}[h!]
\centering
\begin{tikzcd}
\begin{tikzpicture}
   \draw (-1.5,0)circle (0.5);
   \draw (3.5,0)circle (0.5);
     \draw (-1,0)--(3,0);
    \draw (0.5,0) to [out=300, in=240] (1.5,0);
    \draw (0.5,0.2) node {$v_1$};
    \draw (-1+0.3,0) node[below] {$w_1$};
    \draw (2.29+0.5,0) node[below] {$w_2$};
    \draw (1,-0.2) node[below] {$\gamma$};
    \draw (1.5,0.2) node {$v_2$};
     \draw (-1.5,-0.09) node {$\Gamma_1$};
      \draw (3.5,-0.09) node {$\Gamma_2$};
    \vertex{0.5,0}
\vertex{3,0}
\vertex{1.5,0}
\vertex{-1,0}
\end{tikzpicture}\end{tikzcd}\caption{  
  A non-hyperelliptic metric graph with a cycle consisting of edges of different lengths, glued via bridges to two hyperelliptic subgraphs $\Ga_1$ and $\Ga_2$.}
    \label{fg:controesnuovo}
\end{figure}

\begin{prop}\label{prp:new_counterex}
    Let $\Ga$ be a graph of genus $g\geq 7$, as in Figure \ref{fg:controesnuovo}. Then $w_{g-1}^1(\Gamma)=g-3.$
\end{prop}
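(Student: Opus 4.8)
The plan is to show the equality $w^1_{g-1}(\Ga)=g-3$ by establishing the two inequalities. The upper bound $w^1_{g-1}(\Ga)\leq g-3$ is immediate from \cref{prp:ineq} (the conditions $0<2\leq g-1<g$ are satisfied for $g\geq 7$). So the real content is the lower bound: I must verify that for \emph{every} effective divisor $E$ of degree $g-2$ on $\Ga$ there is a point $p\in\Ga$ such that $\rr(E+p)\geq 1$. By the remark extending \cref{lem:cyclereduced} to trees of cycles, I may assume $E$ is cycle-reduced, so $\deg(E|_\gamma)\leq 1$ on the central cycle $\gamma$, $\deg(E|_b)=0$ on every bridge, and the mass of $E$ is distributed one point per cycle across $\Ga_1$, $\gamma$, and $\Ga_2$.

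The key combinatorial observation is a degree count on the two hyperelliptic sides. Write $E=E_1+E_\gamma+E_2$ where $E_i=E|_{\Ga_i}$ and $E_\gamma=E|_{\gamma}$. Since $\deg E=g-2$, $\deg E_\gamma\leq 1$, and $\g(\Ga_1)+\g(\Ga_2)+1=g$, a parity/counting argument shows that at least one side $\Ga_i$ carries a divisor of degree $\geq g_i-1$, and in the worst case both sides are "nearly full." The plan is to choose $p$ so that, after moving mass through the bridges to the boundary points $w_1,w_2$, the divisor $E+p$ becomes linearly equivalent to one of the form $2w_i+R$ (on the relevant side) or to something with the doubled point $2v_j$ on the central cycle, which then has rank $1$. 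Concretely: because each $\Ga_i$ is hyperelliptic with $2w_i\in W^1_2(\Ga_i)$, I can use the hyperelliptic involution $\iota_i$ on $\Ga_i$ to pair a point $x$ in the interior of $\Ga_i$ with $p=\iota_i(x)$ so that $x+p\sim 2w_i$; this lets me accumulate the doubled boundary point and pivot mass onto the bridge and central cycle. The case analysis splits according to where the central-cycle point (if any) sits and which side is degree-deficient, mirroring the two-case structure of \cref{ex:Coppens}.

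To confirm that a candidate divisor $D=E+p$ actually has rank at least $1$, I will use Dhar's burning algorithm: I show that $D$ is \emph{not} $w$-reduced to an object that leaves the whole graph burnt, equivalently that after pushing all mass to a common vertex the fire is controlled and $D\sim 2v_1+E'$ (or $2v_2+E'$) with $E'$ effective and one point on each hyperelliptic block. The hyperelliptic structure of each $\Ga_i$ guarantees that, once two units of mass sit at $w_i$, any single prescribed point on $\Ga_i$ can be reached by an effective equivalent divisor; and the two parallel edges of different lengths in $\gamma$ force the non-hyperelliptic behaviour that makes the single-point completion work exactly at $d=g-1$ but fail for smaller $d$. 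I will also invoke Riemann--Roch on the complementary subgraphs: once $D$ restricted to one side has the right degree and a doubled boundary point, the restriction automatically has rank $\geq 1$ there, and gluing the local rank-$1$ pencils across the bridge yields global rank $\geq 1$.

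The main obstacle I anticipate is the bookkeeping in the boundary case where the central cycle already carries a point of $E$ and \emph{both} hyperelliptic sides are degree-deficient by one. There, adding a single point $p$ must simultaneously create a doubled point on one side (via the involution) and allow the leftover mass on the cycle to slide to a vertex $v_j$ so that it can be absorbed into the opposite block; one must check that the two different edge-lengths in $\gamma$ do not obstruct this slide and that the resulting divisor genuinely attains rank $1$ rather than staying $w$-reduced. Verifying this via a clean Dhar's-algorithm argument — checking that every choice of firing vertex leaves some unburnt portion controlled by the divisor — is the delicate step, and I expect it to require treating separately the subcase where $E_\gamma=0$ (so the deficiency is entirely on the sides) from the subcase $\deg E_\gamma=1$, exactly as the two itemized cases in \cref{ex:Coppens_modified} distinguish whether the fire can be stopped.
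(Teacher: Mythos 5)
Your strategy is essentially the paper's: reduce to $\deg(E|_\gamma)\leq 1$, split on whether $\gamma$ carries a point, use the hyperelliptic involution on a block $\Ga_i$ to produce $2w_i$, and then use $2w_j\in W^1_2(\Ga_j)$ together with Riemann--Roch on the blocks to certify rank $1$. Two points in your write-up are off, though. First, you invoke the cycle-reduced normal form via the remark extending \cref{lem:cyclereduced}, but the blocks $\Ga_1,\Ga_2$ here are \emph{arbitrary} hyperelliptic graphs with $2w_i\in W^1_2(\Ga_i)$, not trees of cycles, so you are not entitled to ``one point per cycle'' inside the $\Ga_i$; all you can (and need to) arrange is $\deg(E|_\gamma)\leq 1$, exactly as the paper does. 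This is harmless for the rest of your argument but it is an unjustified step as stated.

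Second, you locate the difficulty in the wrong place. The case you call delicate --- $\deg(E|_\gamma)=1$ with both sides deficient by one --- is in fact immediate: reflecting one support point of $E|_{\Ga_i}$ gives $2w_i$, pushing it through the bridge puts total degree $3$ on the cycle $\gamma$, and a degree-$3$ divisor on a single cycle has rank $2$ \emph{regardless} of the two edge lengths, so $2v_i+q\sim 2v_j+q'$ and one lands on $2w_j$ with rank $1$ on $\Ga_j$; the differing lengths of $\gamma$ never obstruct this slide (they only matter for the failure at $d\leq g-2$ in \cref{prp:new_counterex2}). The genuinely load-bearing cases are the lopsided ones with $\deg(E|_\gamma)=0$: when all of $E$ sits on one block, you must extract $3w_2$ (not just $2w_2$) so that two units can cross $\gamma$ and become $2w_1$ on the empty side while a pencil survives on $\Ga_2$; this is exactly where Riemann--Roch gives $\rr((E+p)|_{\Ga_2})\geq g_1\geq 3$ and where the hypothesis $g\geq 7$ (i.e.\ $g_i\geq 3$) enters. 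Your proposal never identifies where this hypothesis is used, and the Dhar-algorithm verification you defer is precisely this case analysis; as written the proof is a correct plan but the decisive step is left unexecuted.
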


\begin{proof}
Let $E$ be any divisor of degree $g-2.$ Let us notice that, similarly to the chains of cycles, we can always assume, up to linear equivalence, that $E$ has degree at most one in the cycle defined by the two parallel edges of different lengths, denoted by $\gamma$.
We also denote by $g_i$ the genus of the hyperelliptic subgraph $\Ga_i.$ Then $g=g_1+g_2+1.$

We consider all possibilities for $E$ and show that, for each of them, we can always find a $p\in\Ga$ such that $\operatorname{r}(E+p)\geq 1.$
\begin{enumerate}
\item Let us consider first the case where $\operatorname{deg}E|_{\gamma}=1$.
Then, there exists some $i\in\{1,2\}$ such that $\operatorname{deg}E|_{\Gamma_i}\neq 0.$ We place $p\in\Ga_i,$ as the image of any point in the support of $E|_{\Gamma_i}$, under the hyperelliptic involution. Then  $E|_{\Ga_i}+p\sim 2v_i.$ In particular, $E+p$ is linearly equivalent to a divisor of degree $3$ when restricted to $\gamma$ which yields a linearly equivalent divisor with $2w_j$, with $j\neq i$, in its support, with rank $1$ over $\Ga_j$ since $w_j\in W^1_2(\Ga_j)$. 

    \item Now we consider the case where $\operatorname{deg}E|_{\gamma}=0$.
    \begin{enumerate}
        \item If $\operatorname{deg}E|_{\Gamma_1}=0,$ then  $\operatorname{deg}E|_{\Gamma_2}=g-2=g_1+g_2-1.$   Any $p\in\Ga_2$ then yields a divisor $E+p$ of degree $g_1+g_2$ and by Riemann--Roch such a divisor, over $\Ga_2$ has also rank $\operatorname{r}(E+p)\geq g_1\geq3$. Therefore we can consider an equivalent divisor that contains $3w_2$ in its support, which in turn is also linearly equivalent to a divisor with $2w_1$ in its support, which has rank $1$ on $\Ga_1.$

    \item If $\operatorname{deg}E|_{\Gamma_i}=k_i> 0,$ for each $i,$ the we have $k_1+k_2=g_1+g_2-1.$ 
    In particular if $k_1\leq g_1-1$ then $k_2\geq g_2.$
    In this case we put $p\in\Gamma_1$ as the image of any point in the support of $E|_{\Gamma_1}$, via the hyperelliptic involution. Therefore, $E+p$ has rank $1$ on $\Gamma_1$ and it is linearly equivalent to a divisor with $2v_1\sim w_2+p'$ in its support, where $p'$ is a point in the interior of $\gamma.$ Such a linearly equivalent divisor has rank $1$ also in $\Gamma_2:$ it is a divisor of degree $k_2+1\geq g_2+1$ and the claim follows from Riemann--Roch.

    If instead $k_1\geq g_1$ then $k_2\leq g_2-1$ and we repeat the above with $p\in\Gamma_2,$ with the same properties.
    \end{enumerate}
\end{enumerate}
\end{proof}

Similarly to Martens-special trees of of cycles, if we take instead $d\leq g-3+r$ then Conjecture \ref{JLconj} holds, namely $w_{d}^r(\Ga)<d-2r.$ We provide here a proof for $r=1$ which suffices to prove the result for $r>1$ by Proposition \ref{prp:induction}.

\begin{prop}\label{prp:new_counterex2}
     Let $\Ga$ be as in Proposition \ref{prp:new_counterex} and $d\leq g-2$. Then $w_{d}^1(\Ga)<d-2.$
\end{prop}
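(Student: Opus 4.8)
The plan is to exhibit a single ``bad'' effective divisor that cannot be completed to rank $1$. By the definition of the Brill--Noether rank, proving $w_d^1(\Ga)<d-2$ amounts to producing an effective divisor $E$ of degree $d-1$, taken to be a sum of $d-1$ distinct points, such that $\rr(E+p)=0$ for every $p\in\Ga$: indeed, any effective $D$ of degree $d$ whose support contains $\Supp(E)$ must then be of the form $E+p$, so no such $D$ lies in $W_d^1(\Ga)$.

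First I would construct $E$. Write $g=g_1+g_2+1$ and choose integers $s\le g_1-1$ and $t\le g_2-1$ with $s+t=d-1$, which is possible since $d-1\le g-3=g_1+g_2-2$. I place $s$ generic points of $E$ in $\Ga_1$ and $t$ generic points in $\Ga_2$, each in a distinct, non-Weierstrass hyperelliptic fibre, and none on $\gamma$. Genericity guarantees that $E|_{\Ga_i}$ has rank $0$ and, crucially, that the degree-$g_i$ divisor $E|_{\Ga_i}+w_i$ is non-special on $\Ga_i$.

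The heart of the argument is a transfer estimate across the non-hyperelliptic cycle $\gamma$. Cutting at the cut vertex $v_2$ (respectively $v_1$), covering a point $q\in\Ga_2$ by $E+p$ reduces, via the theory of reduced divisors, to covering $q$ on $\Ga_2$ by $E|_{\Ga_2}+c\,w_2$, where $c$ is the number of chips the complementary side can push to $w_2$. Since the two edges of $\gamma$ have different lengths we have $2v_1\not\sim 2v_2$, and more precisely the $v_2$-reduced form of $m\cdot v_1$ on $\gamma$ is $(m-1)v_2+z$ with $z$ an interior point: exactly one chip is lost in any crossing of $\gamma$. Combined with the fact that adding a single point $p$ to the generic rank-$0$ divisor $E|_{\Ga_1}$ can raise the number of chips deliverable to the Weierstrass point $w_1$ to at most $2$ (a single conjugate pair, as $2w_1$ is the hyperelliptic class), this forces $c\le 1$. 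Hence the relevant divisor on $\Ga_2$ is dominated by $E|_{\Ga_2}+w_2$, of degree at most $g_2$.

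It then remains to observe that, being non-special of degree $g_2$, the divisor $E|_{\Ga_2}+w_2$ has rank $0$ on $\Ga_2$, so it fails to cover a suitable point $q\in\Ga_2$, which I take as the witness for that $p$. The same computation with the indices exchanged handles $p$ on the $\Ga_1$-side, while a point $p$ interior to $\gamma$ or lying on a bridge transfers no extra chip to either side (a single chip on $\gamma$ never reaches $v_1$ or $v_2$), so the witness argument applies a fortiori. Finally the statement for $r>1$ follows from \cref{prp:induction}. The main obstacle is the sharp boundary case $d=g-2$, which forces $s=g_1-1$ and $t=g_2-1$: here the degrees on both hyperelliptic sides are maximal, so the argument depends entirely on the exact one-chip tax imposed by $\gamma$ and on verifying that the resulting degree-$g_i$ divisor $E|_{\Ga_i}+w_i$ is genuinely non-special.
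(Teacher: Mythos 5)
Your proposal is correct in outline and rests on the same two mechanisms as the paper's proof: the hyperelliptic structure of $\Ga_1,\Ga_2$ forces any rank gain to come from a single conjugate pair delivering two chips to the Weierstrass point $w_i$, and the unequal edges of $\gamma$ tax one chip in transit, so at most one chip reaches the far side. The packaging differs: the paper constructs $E$ explicitly (points in distinct cycles, none in the cycle through $w_i$, no two conjugate under the involution), invokes \cite[Theorem 3.6]{L} to conclude that a rank-raising $p$ must equal $\iota(x)$ for some $x\in\Supp(E)$, writes the explicit equivalence $E+p\sim w_j+p'+\cdots$, and finishes with Dhar's algorithm started in the cycle containing $w_j$; you instead run a reduced-divisor transfer across the cut vertices and finish with Riemann--Roch (non-specialty of $E|_{\Ga_j}+w_j$). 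Both endings work, and your chip-counting formulation is arguably cleaner in the boundary case $d=g-2$.

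The one step you must shore up is the claim that $E|_{\Ga_1}+p$ delivers at most $2$ chips to $w_1$. This is the load-bearing assertion: if $3$ chips could reach $v_1$, then $3v_1-2v_2$ has degree $1$ on the genus-one cycle $\gamma$ and is therefore equivalent to an effective divisor, so two chips would reach $w_2$ and $E|_{\Ga_2}+2w_2$ would have rank at least $1$ on $\Ga_2$, destroying the witness argument. Your parenthetical ``a single conjugate pair, as $2w_1$ is the hyperelliptic class'' names the right idea but is not a proof: one needs the structure theorem for positive-rank divisors of degree at most $g_1$ on a hyperelliptic graph (this is exactly what the paper imports from \cite[Theorem 3.6]{L}) to see that $E|_{\Ga_1}+p\sim 2w_1+F$ forces $p=\iota(x)$, and then an additional genericity condition --- that $E|_{\Ga_i}-x-w_i$ is not equivalent to an effective divisor for any $x\in\Supp(E|_{\Ga_i})$ --- to exclude a third chip; the conditions you list (rank $0$, non-specialty, distinct fibres) do not obviously imply this. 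With that condition added and the structure theorem cited, the argument closes, including the sharp case $d=g-2$, where the non-specialty you require does hold generically because $K_{\Ga_i}-E|_{\Ga_i}-w_i$ has degree $g_i-2$ and fails to be effective for a generic choice of the $g_i-1$ points.
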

\begin{proof}
    If suffices to explicitly construct an effective divisor $E$ of degree $d-1$ and show that there exists no $p\in\Ga$ such that $E+p$ has rank $1.$

    Since $2w_i\in W_2^1(\Ga_i),$ there is precisely one cycle in (each component of) $\Ga_i$ containing $w_i.$  We define $E$ by considering $d_i\leq g(\Ga_i)-1$ points on each $\Ga_i,$ 
    each in distinct cycles and that no two points are exchanged via the hyperelliptic involution.  We further require that the cycles containing $w_i$ do not contain such points. It is always possible to choose such a divisor (this will be later proved in Lemma \ref{lm:cycles}).
    Notice that on $\ga$ we have $\deg (E|_{\gamma})=0$.

    Suppose now by contradiction that there exists $p$ such that $\operatorname{r}(E+p)\geq 1,$ then $p\in \Ga_i$ for some $i,$ and it is the image under the corresponding hyperelliptic involution of some point $x\in \operatorname{Supp}(E|_{\Ga_i})$, \cite[Theorem 3.6]{L}. Then we have $E+p\sim w_j+p'+(E|_{\Ga_i}-x)+(E|_{\Ga_j})$ with $j\neq i$ and $p'$ a point in the interior of an edge in $\gamma$.
    Starting a fire in the cycle containing $w_j,$ away from $w_j$, it burns the whole graph, giving a contradiction.
\end{proof}

The graph considered above, together with Martens-special trees (and chains) of cycles are not the only counterexamples to Conjecture \ref{JLconj}, consider for instance a hyperelliptic graph (of sufficiently large genus) where we change the length of exactly one of the edges not contracted via the degree $2$ map to a tree.

We will prove in the next section that such graphs are no longer a counterexample for the conjecture if $d\leq g-3+r$, as we have seen in Proposition \ref{prp:new_counterex2}.

\section{Jensen-Len's conjecture for $d\leq g-3+r$}\label{sc:main_proof}

Let us recall that all the graphs discussed in the previous section provide counterexamples to Conjecture \ref{JLconj} only when $d=g-2+r.$
In particular, we have proved that for non-hyperelliptic trees of cycles and graphs in Subsection \ref{ssc:further} the equality for the Brill--Noether rank cannot be achieved if $d\leq g-3+r$. 

This section is devoted to prove that the Conjecture \ref{JLconj} is indeed true for $d\leq g-3+r$ and thus prove Theorem \ref{th:main_theo}. 

We will prove this first for $r=1,$ in Proposition \ref{prp:no_sep} and the result generalizes to $r>1$ by Proposition \ref{prp:induction}.
In order to prove \cref{prp:no_sep} we will construct a divisor $E=x_1+\dots+x_{d-1}$; $d < g-1$ satisfying the following two properties.
\begin{itemize}
    \item[(E1)] All $x_i$ are such that there exists a cycle containing them uniquely.
\end{itemize}

This in particular yields a divisor of trivial rank. Moreover such a divisor always exists because of the following. 

\begin{lem}\label{lm:cycles}    Let $\Gamma$ be a metric graph of genus $g$. Then there exist $g$ distinct edges $e_1,\dots,e_{g}$ such that for each $i$ there exists a cycle in $\Gamma$ that contains $e_i$ and not $e_j$ for any $j\neq i.$\end{lem}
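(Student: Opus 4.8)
The plan is to reduce the statement to the standard theory of spanning trees and fundamental cycles of the underlying combinatorial graph. First I would fix a model of $\Gamma$, say the canonical model $(G_0, l_0)$, and work entirely with its finite combinatorial graph $G_0$, which is connected by assumption. Since the genus of $\Gamma$ is the first Betti number of $G_0$, we have the identity $g = |E(G_0)| - |V(G_0)| + 1$.

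Next I would choose a spanning tree $T$ of $G_0$. A spanning tree has exactly $|V(G_0)| - 1$ edges, so the number of edges of $G_0$ not lying in $T$ is $|E(G_0)| - (|V(G_0)| - 1) = |E(G_0)| - |V(G_0)| + 1 = g$. I would then set $e_1, \dots, e_g$ to be precisely these $g$ edges outside $T$; they are distinct by construction, which already gives the required $g$ distinct edges.

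The heart of the argument is the fundamental cycle construction. For each index $i$, adding the chord $e_i$ to the tree $T$ creates a unique cycle $C_i$ in $G_0$ (the endpoints of $e_i$ are joined by a unique path in $T$, and closing this path with $e_i$ yields $C_i$). By definition every edge of $C_i$ other than $e_i$ lies in $T$, hence $C_i$ contains $e_i$ but none of the other chords $e_j$ with $j \neq i$. Viewing $C_i$ as a genuine cycle in the metric graph $\Gamma$, this is exactly the claimed property: each $e_i$ is contained in a cycle $C_i$ avoiding all $e_j$ for $j \neq i$. There is essentially no obstacle here; the only point requiring care is the bookkeeping of the edge count via the first Betti number and the observation that $\Gamma$ has no points of valence $1$ (assumed throughout the paper), so that the canonical model genuinely realizes every fundamental cycle as a topological cycle of the metric graph.
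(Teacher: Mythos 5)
Your argument is correct and is exactly the paper's proof: choose a spanning tree, let $e_1,\dots,e_g$ be the $g$ edges in its complement, and use the fundamental cycle of each chord, which by construction avoids all the other chords. The paper merely states this more tersely, so there is nothing to add.
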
\begin{proof}It is sufficient to choose a spanning tree of the graph. Then the complement is given by $g$ edges and we can take $e_1,\dots, e_g$ to be such edges.\end{proof}

Therefore, one can place the points $x_i$ in the interior of some of the $e_j$, as in \cref{lm:cycles}.
We further require that the points $x_i$ are not placed at the same distances with respect to the endpoints of the edges containing them.

\begin{itemize}
    \item[(E2)] Denote by $e_i$ the edge containing $x_i$ in its interior and $u_i^{(1)}, u_i^{(2)}$ the endpoints of $e_i.$ We require $d(x_i,u_i^{(j)})$ to be all different from each other, and $d(x_i,u_i^{(j)})\neq \sum_{e\in S\subseteq E(G_0)} l(e)$ for $i=1,\dots,d-1$, $j=1,2.$
\end{itemize}
As (E1), this property can also be satisfied since we are considering finite graphs.

Finally, in order to obtain a proof of Theorem \ref{th:main_theo} it is sufficient to prove the following.
\begin{prop}\label{prp:no_sep}
    Let $\Gamma$ be non-hyperelliptic of genus $g$. Let $d\leq g-2,$ then $w^1_d(\Ga)<d-2.$
\end{prop}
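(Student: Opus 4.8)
The plan is to prove this by constructing an explicit divisor $E$ of degree $d-1$ that witnesses $w^1_d(\Ga)<d-2$, namely a divisor such that no point $p\in\Ga$ makes $\rr(E+p)\geq 1$. By the definition of the Brill--Noether rank, exhibiting one such $E$ suffices, and by \cref{prp:induction} establishing the case $r=1$ is enough to conclude \cref{th:main_theo} for all $r$. The divisor $E=x_1+\dots+x_{d-1}$ will be chosen via \cref{lm:cycles}: each $x_i$ sits in the interior of a distinct complementary edge $e_i$ of a fixed spanning tree, so that each $x_i$ is contained in a cycle in which it is the only support point of $E$ (property (E1)), and the distances $d(x_i,u_i^{(j)})$ are chosen generically so that no coincidences of the form $d(x_i,u_i^{(j)})=\sum_{e\in S}l(e)$ occur (property (E2)).

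First I would argue that $\rr(E)\le 0$: since the $x_i$ lie on $d-1\leq g-3$ distinct ``independent'' cycles with generic positions, Dhar's burning algorithm started from any point will burn the whole graph, so $E$ cannot dominate any effective divisor after subtracting a general point; this uses the genericity in (E2) to prevent the fire from stalling at any $x_i$. The heart of the argument is then to show that for \emph{every} $p\in\Ga$ we still have $\rr(E+p)=0$. The natural strategy is to fix $p$, pass to the $p$-reduced divisor equivalent to $E+p$, and run Dhar's algorithm from a suitable chosen point $w$ (for instance a point on one of the cycles $e_i$ distinct from $x_i$). If the whole graph burns then $(E+p)-w$ is $w$-reduced of rank $-1$, hence $\rr(E+p)<1$, as recorded in the excerpt's discussion of \cite{BS}.

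The key case analysis is according to where $p$ lands relative to the cycles carrying the $x_i$. If $p$ lies on a cycle already containing some $x_i$, then to obstruct rank growth I would burn from a third point on that cycle; the genericity condition (E2) guarantees $x_i$ and $p$ together cannot ``firewall'' the cycle, so the fire propagates through the bridges to the rest of $\Ga$. If instead $p$ lies on a cycle disjoint from all $x_i$ (or on a bridge), then $E+p$ has a support point in yet another independent cycle and an analogous burning argument applies. The essential point to extract is that, because $\deg(E+p)=d\le g-2<g$, there is always a ``free'' cycle, i.e. a cycle whose edge carries no mass of $E+p$, from which one can ignite a fire that nothing can contain; this is where $d\le g-2$ (as opposed to $d=g-2+r$) is genuinely used, and it is exactly the slack that fails in the counterexamples of \cref{ssc:further}.

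The main obstacle I anticipate is the uniformity of the argument across \emph{all} $p\in\Ga$ simultaneously: for a fixed $E$ one must rule out rank growth for a continuum of choices of $p$, and the $p$-reduced representative of $E+p$ changes discontinuously as $p$ crosses vertices. I would handle this by showing that the burning obstruction is robust, depending only on the combinatorial location of $p$ (which cycle or bridge, and on which side of the generic distances in (E2)) rather than its exact position, so that finitely many combinatorial cases cover the whole graph. The delicate sub-case is when $p$ is chosen to be the image of some $x_i$ under a local hyperelliptic-type involution on a single cycle, since that is precisely the move that creates effective equivalents with doubled points; here I would invoke non-hyperellipticity of $\Ga$ and property (E2) to argue that such a local doubling cannot be propagated globally to yield an effective divisor dominating a general point, so the rank still does not reach $1$.
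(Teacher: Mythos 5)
Your setup matches the paper's: reduce to $r=1$ via \cref{prp:induction}, build $E=x_1+\dots+x_{d-1}$ using \cref{lm:cycles} so that (E1) and (E2) hold, and run Dhar's algorithm to contradict $\rr(E+p)\geq 1$. But the step you identify as the ``essential point'' --- that $\deg(E+p)\le g-2<g$ leaves a free cycle from which an uncontainable fire can be started --- is not the mechanism, and as stated it is false. In the counterexamples of \cref{prp:new_counterex} one has $\deg(E+p)=g-1<g$, so a cycle carrying no mass of $E+p$ exists there as well, and yet the rank is $1$: linear equivalence moves points of $E+p$ onto vertices so that pairs of points form firewalls on every cycle the fire tries to enter. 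The existence of a free cycle therefore cannot by itself finish the proof, and the ``delicate sub-case'' you defer to non-hyperellipticity and (E2) is in fact the entire content of the argument; you give no mechanism for how non-hyperellipticity blocks the global propagation of a local doubling.

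What the paper actually does is isolate, from non-hyperellipticity, a specific non-loop cycle $\gamma$ satisfying one of the structural alternatives (NH1)a, (NH1)b or (NH2), together with one or two auxiliary cycles $\gamma'$, $\gamma''$, and then requires the support of $E$ to avoid $\gamma$, $\gamma'$ and $\gamma''$ in addition to (E1) and (E2); this is precisely where $d-1\le g-3$ is used (three reserved cycles), not merely to leave one cycle empty. The contradiction is then obtained by an iterative chase: starting Dhar's algorithm at $\gamma$, the edges where the fire stops form a $k'$-edge cut, the points on it are slid toward $\gamma$ until (by (E2)) at most two reach vertices, and the process repeats until two vertices $z_1,z_2$ land on $\gamma$ itself; only there does the case analysis (different edge lengths in the $2$-edge cut, or a cycle $\gamma'$ containing exactly one endpoint, or the absence of any $2$-edge cut in $\gamma$) show that the resulting pair cannot firewall a fire started from $\gamma'$ or $\gamma''$. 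Without this reserved configuration and the equivalence-chasing step, your argument does not distinguish the case $d\le g-2$ from the genuine counterexamples at $d=g-1$, so the proof as proposed has a gap at its central step.
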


\begin{proof}
    For simplicity we assume our graph to be $2$-edge connected since the contraction of bridges does not change the rank of a divisor. 
    Similarly to the proof of Proposition \ref{prp:new_counterex2}, we construct a divisor $E$ of degree $d-1$ such that no other point $p$ yields $\rr(E+p)\geq1.$

    Since $\Ga$ is non-hyperelliptic, from \cite{MC}, then $\Gamma$ has at least three vertices (or at least two if it contains loops) and  there exist a non-loop cycle $\gamma$, whose edges satisfy one the following.
    \begin{itemize}
    \item[(NH1)] Two edges of $\gamma$ define a $2$-edge cut of $\Gamma$ such that:
    \begin{itemize}
        \item[a.] the $2$-edge cut is defined by edges of different length; or
        \item[b.] the $2$-edge cut is defined by non-parallel edges and there exists another cycle in $\Gamma$ containing only one of the two distinct endpoints of such an edge-cut.
    \end{itemize}
    \item[(NH2)] No pair of edges in $\gamma$ defines a $2$-edge cut of $\Ga$. 
    \end{itemize}

    Notice that, in the description of $\gamma$ in (NH1)a, we also consider for instance a hyperelliptic graph, where we change the length of an edge not fixed by the hyperelliptic involution, as mentioned at the end of the last section, see for instance Figure \ref{fg:controesnuovo}. An example of a graph whose cycle $\gamma$ instead satisfies (NH1)b is shown in Figure \ref{fg:quasihypgraph}.

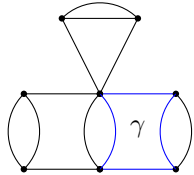
\begin{figure}[h!]
\centering
\begin{tikzcd}
\begin{tikzpicture}
     \draw (0,0)--(1,0);
 \draw (0,1)--(1,1);
    \vertex{0,0}
 \vertex{1,0}
  \vertex{0,1}
   \vertex{1,1}
\draw (0,1) to [out=-45, in=45] (0,0);
\draw (0,1) to [out=-45-90, in=45+90] (0,0);

    \draw[blue] (0+1,0)--(1+1,0);
 \draw[blue] (0+1,1)--(1+1,1);
    \vertex{0+1,0}
 \vertex{1+1,0}
  \vertex{0+1,1}
   \vertex{1+1,1}
\draw[](0+2,1) to [out=-45, in=45] (0+2,0);
\draw[blue] (0+2,1) to [out=-45-90, in=45+90] (0+2,0);
\draw(1.5,0.5) node{$\gamma$};
\draw (1,1)[blue] to [out=-45, in=45] (1,0);
\draw (1,1) to [out=-45-90, in=45+90] (1,0);
  \draw (1,1)--(0.5,2);
 \draw (1,1)--(1.5,2);
 \draw (0.5,2)--(1.5,2);

 \draw (0.5,2) to [out=45, in=45+90] (1.5,2);
  \vertex{0.5,2}
  \vertex{1.5,2}
\end{tikzpicture}\end{tikzcd}\caption{  
  A non-hyperelliptic metric graph with a cycle $\gamma$ satisfying (NH1)b.}
    \label{fg:quasihypgraph}
\end{figure}
    If $\gamma$ satisfies (NH1)a, we denote $\gamma',\gamma''$ the two cycles incident to $\gamma$ (i.e. sharing some of its points), in distinct components of $\Ga$ with the $2$-edge cut removed, such that both contain the vertex of the shortest edge in such a cut.
    If $\gamma$ satisfies (NH1)b, let $\gamma'$ be the cycle containing only one of two distinct endpoints of the cut.

    Instead, if $\gamma$ satisfies (NH2), let $k_0\geq 3$ be then the minimum for which a pair of edges in $\gamma$ determines a non-trivial edge cut.
    We choose $\gamma'$ to be any cycle containing any of the two edges $e\in\{e_1,e_2\}$ in $\gamma$ defining such a cut. Notice that this is always possible otherwise the edges in $\gamma$ would determine a $2$-edge cut and we would be in the previous case.
    We also define $\gamma''$ as a cycle, distinct to $\gamma,\gamma'$ containing an edge consecutive to $e$ in $\gamma.$  

    Let $E:=\sum_{i=1}^{d-1} x_i$ with $x_1,\dots,x_{d-1}$ satisfying (E1), (E2), where we further require them not be in $\gamma, \gamma'$ and $\gamma''$, when the latter is defined. 
    This is always possible as $d-1\leq g-3$. 

    We assume by contradiction that there exists $p$ such that $\rr(E+p)\geq1$. Then $p$ must be contained in the same cycle of some $x_i$. 
    We start Dhar's algorithm from $\gamma$. Since $\rr(E)=0$ by (E1), but $\rr(E+p)\geq1$, the fire has to stop at $x_i,$ $p$ and possibly at some other $x_j$'s, with $j\in J\subset\{1,\dots, d-1\}\setminus\{i\}$. The edges at which the fire stops form a $k'$-edge cut, for some $k'\geq 2.$ We take the linearly equivalent divisor obtained by moving $x_i,p$ and $x_j$ with $j\in J$ along such a cut, towards the burnt direction, until the first point reaches a vertex: $x_i+p+\sum_{j\in J} x_j\sim \sum_{i=1}^{k'} y_i.$ By (E2) at most two points can reach a vertex. We assume that these points are $y_1$ and possibly $y_2$. Notice that the other $y_i$'s, with $i\neq 1,2$, again satisfy (E1),(E2). 
    
    We then consider another cycle $\gamma_1$, containing $y_1$ in the same connected component obtained by removing the $k'$-edge cut, containing $\gamma$.
    Either $\gamma_1$ contains another $x_{j'},$ for some $j'\notin \{i\}\cup J$ or it also contains $y_2$, which is a vertex. Otherwise, the linearly equivalent divisor containing $\sum_{i=1}^{k'} y_i$ in its support would be supported with multiplicity one in this cycle, and it would burn starting a fire at $\gamma_1$. 
    Let us discuss these two possibilities.
    \begin{enumerate}
    \item If the cycle $\gamma_1$ contains some $x_{j'}$; $j'\notin\{i\}\cup J,$ we repeat the above argument replacing $x_i,p, x_j$ with $j\in J$ with $x_{j'}$ and with the other points in the linearly equivalent divisor that together with $x_{j'}$ form a $k''$-edge cut, again determined by Dhar's burning algorithm when starting a fire at $\gamma.$
    By property (E2), the linearly equivalent divisor obtained by moving such points on this cut will now be supported at most at two vertices. We repeat this argument until we may assume that such vertices are in $\gamma$. If there is only one vertex, starting a fire at any other point in $\gamma$ would then burn everything since the other points satisfy (E1), (E2), giving a contradiction. 
    If the vertices are $2,$ we refer to next case.

    \item Let us assume that $y_1,y_2$ are both vertices, and moving them on incident empty cycles might give linearly equivalent divisors supported again on vertices. By property (E2), this happens only when we move again both vertices, along edges of the same length. 
    We continue by taking linearly equivalent divisors whose supports move towards $\gamma$ as above, until either we end up with an equivalent divisor where only one point moves to a vertex (and we refer to the previous case), or we have a linearly equivalent divisor whose support contains $z_1,z_2$ vertices on $\gamma$. 

    Let us now consider a pair of edges incident to $z_1,z_2$, which defines a $k$-edge cut, with possibly other edges containing some other points in the support of the divisor.
    
    If $k=2$: $z_1+z_2\sim z_1'+z_2'$, until one of the points reaches a vertex. In particular $\gamma$ is as in (NH1) and either both $z_1',z_2'$ cannot be vertices (as in (NH1)a), or one of then is uniquely contained in  $\gamma'$ (in (NH1)b). In both cases, starting Dhar's burning algorithm from $\gamma'$ (or $\gamma''$) burns the entire graph giving a contradiction.
    
    If $k>2$ then $\gamma$ satisfies (NH2) and we might assume that (at least) one of the edges of the cut, in $\gamma$, containing $z_1,z_2$ respectively (or two other vertices of $\gamma$ obtained again by linear equivalence), is contained uniquely in one of the cycles $\gamma'$ or $\gamma''$. It might be possible, for instance when the edge cut is not precisely the $k_0$-edge cut considered when defining $\gamma'$, that $\gamma'$ contains both $z_1,z_2$, but then $\gamma''$ does not by construction. Then we can start Dhar's burning algorithm from there, and the whole graph burns again.
    \end{enumerate}
\end{proof}

The argument in the proof of \cref{prp:no_sep} shows in particular why for non-hyperelliptic graphs, whose underlying graph is hyperelliptic, the equality for the Brill--Noether rank might be achieved only when $d=g-2+r$, (Section \ref{sc:Coppens}). Indeed, when $d\leq g-3+r$ we can construct an effective divisor of degree $d-1\leq g-3,$ whose support is not contained in a non-hyperelliptic cycle and two other cycles incident to it, in the two distinct components of the graph with the cycle removed. 
Similarly, for graphs with a cycle $\gamma$ satisfying (NH2), in order to have a cycle containing uniquely one of the vertices supported on $\gamma$, we need to define in general both $\gamma',\gamma''$.

On the other hand, when the underlying graph is non-hyperelliptic, but all cycles which are neither loops nor parallel edges determine a $2$-edge cut, we only need to consider the cycle $\gamma'$ in the above proof. This means that for such graphs, the same proof holds if we allow the degree of the divisor $E$ to be $d-1\leq g-2$.
\bibliographystyle{alpha}
\bibliography{bib}
\end{document}